\theoremstyle{plain}
\newtheorem{theorem}{Theorem}[section]
\newtheorem{lem}[theorem]{Lemma}
\numberwithin{theorem}{section} \numberwithin{equation}{section}
\newtheorem {thm} {Theorem}
\newtheorem {prop} [thm] {Proposition}
\newtheorem {lem{lem}}[thm]{Lemma}
\newtheorem {cor} [thm] {Corollary}
\newtheorem {defn} {Definition}
\newtheorem {rmk} {Remark}
\newcommand{\average}{{\mathchoice {\kern1ex\vcenter{\hrule height.4pt
width 6pt depth0pt} \kern-9.7pt} {\kern1ex\vcenter{\hrule height.4pt
width 4.3pt depth0pt} \kern-7pt} {} {} }}
\def\R{\mathbb{R}}
\renewcommand{\d}{\delta }
\newcommand{\D }{\Delta }
\newcommand{\n }{\nabla }
\renewcommand{\phi}{\varphi}
\renewcommand{\t }{\tau }
\renewcommand{\O }{\Omega }
\newcommand{\be}{\begin{equation}}
\newcommand{\ee}{\end{equation}}
\newcommand{\de}{\partial}
\newcommand{\ra}{{\rangle}}
\newcommand{\la}{{\langle}}
\newcommand{\N}{\mathbb{N}}
\renewcommand{\H}{{\mathcal H}}
\newcommand{\cF}{{\mathcal F}}
\newcommand{\cL}{{\mathcal L}}
\newcommand{\cO}{{\mathcal O}}
\newcommand{\Ss}{\textbf{S}}
\newcommand{\B}{{\bf B}}
\renewcommand{\epsilon}{\varepsilon}
\begin{document}

\title {Foliation of an asymptotically flat end by critical capacitors}
\author[M. M. Fall] {Mouhammed Moustapha Fall}
\address{African Institute for Mathematical Sciences in Senegal}
\email{\small{mouhamed.m.fall@aims-senegal.org} }

\author[I. A. Minlend]{Ignace Aristide Minlend} 
\address{Faculty of Economics and Applied Management, University of Douala}
\email{\small{ignace.a.minlend@aims-senegal.org} }

\author[J. Ratzkin] {Jesse Ratzkin} 
\address{Institut f\"ur Mathematik, Universit\"at W\"urzburg}
\email{\small{jesse.ratzkin@mathematik.uni-wuerzburg.de} }
\maketitle

\begin {abstract}\noindent We construct a foliation of an asymptotically flat 
end of a Riemannian manifold by hypersurfaces which are critical points of 
a natural functional arising in potential theory. These hypersurfaces are 
perturbations of large coordinate spheres, and they admit solutions of 
a certain over-determined boundary value problem involving the Laplace-Beltrami 
operator. In a key step we must invert the Dirichlet-to-Neumann operator, 
highlighting the nonlocal nature of our problem. \\
\end {abstract}

\textbf{Keywords}: Over-determined problem, foliation. \\

\textbf{MSC 2010}: 58J05, 58J32, 58J37, 35N10, 35N25
\section {Introduction}

Riemannian manifolds with asymptotically flat ends play an important 
role in general relativity and cosmology, and so their general 
properties are of great interest. In particular, it is often useful to 
foliate an asymptotically flat end with special surfaces. Huisken and 
Yau \cite {HY} famously proved one can foliate a three-dimensional, 
asymptotically flat end with constant mean curvature spheres. 
Furthermore they prove these spheres share a common center, 
which one can take as the physical center of mass of the system. 
Previously, R. Ye \cite{RY} had shown one can foliate an asymptotically 
flat end in any dimension $n \geq 3$ provided the mass at infinity is 
nonzero. 

Subsequently, others have found special foliations by constant expansion 
surfaces \cite{M}, by Willmore 
surfaces \cite{LMS}, and by isoperimetric surfaces \cite{EM}. Here 
we investigate surfaces which are critical points of the Newton 
capacity. Recall that, if $K \subset \R^n$, with
$n\geq 3$, is a compact set, one can define its Newton capacity
as
\begin {equation} \label{cap-defn} 
\operatorname{Cap}(K) = \frac{1}{n(n-2)\omega_n} \inf \left \{ 
\int_{\R^n} |\nabla u|^2 dx : u \in H^1(\R^n), \left. u \right |_K \equiv 1
\right \}, \end {equation} 
where $\omega_n$ is the Euclidean volume of an $n$-dimensional unit 
ball and $H^1(\R^n)$ is the Sobolev space of functions with one weak 
derivative in $L^2(\R^n)$. Standard results in potential theory imply this infimum is 
realized by the equilibrium potential function $U_K$, which solves 
the boundary value problem 
\begin {equation} \label{cap-pde}
\Delta_0 U_K = 0  \textrm{ in } \Omega = \R^n \backslash K, \qquad 
\left. U_K \right |_{\partial K} = 1, \qquad \lim_{|y| \rightarrow \infty} 
U_K (y) = 0 \end {equation} 
where $\Delta_0$ is the usual, flat Laplacian. Moreover, the solution to 
\eqref{cap-pde} is unique among all functions which satisfy an 
appropriate decay condition. 

It is straight-forward 
to generalize both \eqref{cap-defn} and \eqref{cap-pde} to the 
setting of a compact set $K$ in a
complete, noncompact Riemannian manifold $(M,g)$ with an 
asymptotically flat end. As discussed in \cite{SY2}, Newton
 capacity plays a role in the study of scalar curvature and conformal 
 geometry.
 
  The functional $\operatorname{Cap}$ is not scale-invariant in Euclidean 
 space, one should not expect it to have critical points as a domain 
 functional. Thus it is natural to seek critical, and even extremal, 
 domains either subject to a constraint or of a modified functional 
 which is scale invariant. One can normalize $\operatorname{Cap}$ 
 using the volume of $K$ or the surface area of $\partial K$; both 
 choices are natural and have roots in physics and potential theory 
 \cite{GNR}. Below we will seek critical sets of a volume-normalized 
 functional, which leads us to the over-determined boundary value problem 
 \begin {equation} \label {eq:capacitor0}
\Delta_0 u = 0 \textrm{ on }\R^n \backslash K, \qquad \left.
 u \right |_{\partial K} = 1 , \qquad \lim_{|x| \rightarrow \infty}
 u(x) = 0, \qquad \left . \frac{\partial u}{\partial \eta} \right |_{\partial K}
 = \Lambda , \end {equation}
 where $\Lambda$ is a constant. See Section \ref{sec:euler-lagrange} 
 for a derivation of \eqref{eq:capacitor0} as the Euler-Lagrange 
 equation of our normalized domain functional. This computation 
 is standard, but we include it in Appendix \ref{sec:euler-lagrange} 
 for the reader's convenience. 
 
 Of course, one can also choose to normalize using the surface area
 of $\partial K$, which leads one to a slightly different over-determined boundary 
 value problem, namely
 $$
\Delta_0 u = 0 \textrm{ on }\R^n \backslash K, \qquad \left.
 u \right |_{\partial K} = 1 , \qquad \lim_{|x| \rightarrow \infty}
 u(x) = 0, \qquad \left . \frac{\partial u}{\partial \eta} \right |_{\partial K}
 = \Lambda H,$$
 where $H$ is the mean curvature of $\partial K$. We derive 
 this Euler-Lagrange equation as well, even though we do not 
 require it. 

 A classical theorem of Serrin \cite{Ser} implies that the only critical capacitors in
 Euclidean space are round sphere, but one expects the situation to be more
 complicated in a general Riemannian manifold. 

We introduce some notation so that we can state our main 
theorem. Our setting is that of a Riemannian manifold $(M,g)$  
of dimension $n\geq 3$
with one asymptotically flat end. In other words, there exists a
compact set $K \subset M$ and a diffeomorphism
\begin {equation} \label{end-parameterization1}
\Phi: \R^n \backslash \B \rightarrow M \backslash K, \end {equation}
such that in these coordinates
\begin {equation} \label{metric-expansion1}
g_{ij}(y) = (1+ \sigma |y|^{1-n}) \delta_{ij} + h_{ij}(y), \quad
|h_{ij}(y)| = \mathcal{O}(|y|^{-n}), \quad \partial^k h_{ij}(y) =
\mathcal{O}(|y|^{-k-n}).
\end {equation}
Here $\B$ is the unit ball in $\R^n$ centered at the origin and
$\partial^k$ represents any collection of partial derivatives of
order less than or equal to $k$, with $k\in \{ 1,2,3,4\}$.

\begin{thm}\label{main-thm} Let $(M,g)$ be a Riemannian manifold of 
dimension $n\geq 3$ with one asymptotically flat end $M \backslash K$, 
parameterized as in \eqref{end-parameterization1} and 
\eqref{metric-expansion1}. Then 
there exists $\rho_0>1$ and compact sets $K_\rho$ indexed by $\rho
\in (\rho_0, \infty)$ such that the domains $\Omega_{\rho} = M
\backslash K_\rho$ are critical capacitors. In other words, there
exist functions $\bar u_\rho$ which solve the over-determined
boundary value problem
\begin{align}\label{eq:capacitor1}
\begin{cases}
\Delta_g \bar u_\rho  = 0& \quad  \textrm { in } \quad\Omega_\rho\vspace{3mm} \\
 \bar u_\rho  =  1& \quad\textrm{ on } \quad\partial \Omega_\rho =
\partial K_\rho\vspace{3mm}\\
 \lim_{|y| \rightarrow \infty} \bar u_\rho(\phi (y))  =  0&
\vspace{3mm}\\
  \dfrac{\partial \bar u_\rho} {\partial \eta}
= C(\rho, \sigma, n)&\quad
\textrm { on } \quad\partial \Omega_\rho =
\partial K_\rho,
\end{cases}
 \end{align} where  $\eta$ is the unit interior normal to $K_\rho$ and 
$$
C(\rho, \sigma, n)=\frac{n-2}{\rho}+\dfrac{(n-2)(n-3)}{2\rho^{n}} \sigma.
$$
The hypersurfaces $\{ \partial K_\rho\}_{\rho>\rho_0}$
foliate the $M \backslash K_{\rho_0}$.
\end{thm}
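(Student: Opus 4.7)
The plan is to construct each hypersurface $\partial K_\rho$ as a normal graph over the coordinate sphere $S_\rho := \Phi(\{|y|=\rho\})$ and to solve the overdetermined system via the implicit function theorem on $S^{n-1}$. For small $\varphi \in C^{2,\alpha}(S^{n-1})$ set
$$K_\rho^{\varphi} := \Phi\bigl(\{y : 1 < |y| \leq \rho\,(1 + \varphi(y/|y|))\}\bigr) \cup K,$$
let $u_\rho^{\varphi}$ be the unique harmonic function on $M \setminus K_\rho^{\varphi}$ with boundary value $1$ and decay at infinity (existence and smooth dependence on $\varphi$ follow from the standard theory of the exterior Dirichlet problem on asymptotically flat manifolds), and define
$$F(\rho, \varphi) := \left.\frac{\partial u_\rho^{\varphi}}{\partial \eta}\right|_{\partial K_\rho^\varphi} - C(\rho, \sigma, n),$$
pulled back to $S^{n-1}$ via the parameterization. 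Theorem \ref{main-thm} reduces to finding, for each large $\rho$, a small zero of $F(\rho, \cdot)$ and then verifying that the resulting graphs stay disjoint.

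First I would compute $F(\rho, 0)$. The Euclidean capacitor $U_0(y) = (\rho/|y|)^{n-2}$ has inward normal derivative $(n-2)/\rho$ on $\{|y|=\rho\}$, accounting for the first term of $C(\rho,\sigma,n)$; expanding the Laplace--Beltrami operator in the metric $(1+\sigma|y|^{1-n})\delta_{ij} + h_{ij}$ to next order then produces the correction $(n-2)(n-3)\sigma/(2\rho^n)$ in the mean of the normal derivative. The remainder is driven entirely by the lower-order term $h_{ij}$, so $\|F(\rho, 0)\|_{C^{1,\alpha}}$ decays strictly faster than $\rho^{-n}$, supplying the smallness that feeds the implicit function theorem. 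Linearizing at $\varphi = 0$: a graph perturbation $\varphi$ amounts, on the fixed sphere $\{|y|=\rho\}$, to a shift of the Dirichlet data by $-\rho\varphi\,\partial_\eta u_\rho^0 \approx (n-2)\varphi$, so the first-order correction to $u_\rho^\varphi$ is, up to smaller terms, the Euclidean harmonic extension of $(n-2)\varphi$ into $\{|y| > \rho\}$. Combining the shape contribution with the exterior Dirichlet-to-Neumann operator $T_\rho$, whose eigenvalue on the spherical harmonic $Y_k$ has magnitude $(k+n-2)/\rho$, one obtains
$$D_\varphi F(\rho, 0) Y_k = \frac{c_k}{\rho^2}\, Y_k + (\text{lower order}), \qquad c_k > 0 \text{ for } k \geq 1,$$
so $D_\varphi F(\rho, 0)$ is invertible from $C^{2,\alpha}_\perp(S^{n-1})$ to $C^{1,\alpha}_\perp(S^{n-1})$ with operator-norm bounds uniform in $\rho$.

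The main obstacle is precisely the kernel at $k=0$: a constant perturbation $\varphi \equiv c$ is to leading order indistinguishable from a shift of $\rho$, so $D_\varphi F(\rho,0)$ cannot be surjective without modification. The remedy is to split $L^2(S^{n-1}) = \R \oplus L^2_\perp(S^{n-1})$, treat the overdetermined constant itself as an additional free scalar, and solve the coupled system via the implicit function theorem on $C^{2,\alpha}_\perp(S^{n-1}) \times \R$: the mean-zero projection yields $\varphi = \varphi(\rho)$ with $\|\varphi(\rho)\|_{C^{2,\alpha}} = O(\rho^{1-n})$, and the mean projection determines a constant whose asymptotic expansion reproduces the explicit formula $C(\rho,\sigma,n)$ stated in the theorem. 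The foliation property then reduces to showing that the radial velocity $\frac{d}{d\rho}[\rho(1 + \varphi(\rho))]$ is strictly positive for large $\rho$, which follows from the matching bound $\|\partial_\rho \varphi(\rho)\|_{C^{2,\alpha}} = O(\rho^{-n})$ obtained by differentiating the implicit equation. The technically delicate point, flagged in the abstract, is the nonlocal pseudodifferential nature of $T_\rho$: passing from $L^2$ invertibility to uniform invertibility in Schauder spaces requires treating $T_\rho$ and its inverse as order $\pm 1$ pseudodifferential operators on $S^{n-1}$, since no purely local elliptic theory applies.
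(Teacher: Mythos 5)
The overall scheme you propose — a graph parameterization over coordinate spheres, the implicit function theorem, treating the Neumann constant as a free scalar via the split $L^2 = \R \oplus L^2_\perp$ — is in the same spirit as the paper's, and your handling of the $k=0$ mode and the free constant is correct. However, there is a genuine gap: you have mislocated the kernel of the linearized map. You assert $D_\varphi F(\rho,0)\, Y_k = c_k \rho^{-2} Y_k$ with $c_k>0$ for every $k\ge 1$, so that $D_\varphi F(\rho,0)$ is invertible on the mean-zero space. This is false. The Euclidean model has a translation symmetry — round spheres of a fixed radius centered anywhere are critical capacitors with the \emph{same} Neumann constant — and this forces a nontrivial kernel at $k=1$, not $k=0$. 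Concretely, moving $\{|y|=\rho\}$ to $\{|y|=\rho(1+Y_k)\}$ changes the inward normal derivative to first order by $(n-2)(k-1)\rho^{-1}Y_k$: the exterior Dirichlet-to-Neumann operator contributes $(n+k-2)/\rho$ as you note, but the ``shape'' term from re-evaluating $\partial_\eta u^0_\rho$ on the displaced boundary contributes $(1-n)/\rho$, and the sum vanishes exactly at $k=1$. In the paper this is the spectrum $\lambda_k = k-1$ of the operator $\mathbb{L}$ in Subsection~\ref{secapecta}, with kernel $V_1=\{\Theta^i,\ i=1,\dots,n\}$. Consequently $D_\varphi F(\rho,0)$ restricted to $C^{2,\alpha}_\perp(S^{n-1})$ has an $n$-dimensional kernel and the implicit function theorem cannot be applied in the form you describe.

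The missing ingredient is an extra $n$-dimensional family of parameters to absorb the $k=1$ obstruction. The paper supplies it by building a translation $\tau\in\R^n$ of the center of the coordinate sphere directly into the parameterization $\Phi_{\rho,w,\tau}(x)=\Phi\bigl(\rho\tau+\rho x+\rho w(x/|x|)x\bigr)$, and Proposition~\ref{eq:diffG} shows that the $\tau$-derivative of the (rescaled) Neumann map at the origin is a nonzero multiple of $\sigma\langle x,\tau\rangle$, a pure $k=1$ mode. This restores surjectivity of the combined linearization $D_{(\tau,w)}$, and the implicit function theorem is then run in two stages: first solve for $\tau(\rho,w)$ on $V_1$, then for $w(\rho)$ on $V_1^\perp$ where $\mathbb{L}$ is invertible with the bound \eqref{eq:inverl}. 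Without the translation parameter your argument breaks at the central step. (A secondary point: the linearization eigenvalue has magnitude of order $\rho^{-1}$, not $\rho^{-2}$, but the decisive error is the location of the kernel.)
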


Our result builds naturally on earlier work, particularly that of 
the first and second authors \cite{FM}. 
 More precisely, they perturb small geodesic balls to
 produce a family of domains $\Omega_\rho$, parameterized by $\rho
 \in (0, \rho_0)$, which admit solutions to the overdetermined boundary value
 problem
 $$\Delta_g u = 1 \textrm { on }\Omega_\rho, \qquad
 \left. u \right |_{\partial \Omega_\rho} = 0, \qquad
 \left. \frac{\partial u}{\partial \eta} \right |_{\partial \Omega} = \textrm{ constant.}$$
In our case, the sets $K_\rho$ will be perturbations of large coordinate spheres, 
as defined by the parameterization $\Phi$ in \eqref{end-parameterization1}.

We end this introduction with a brief outline of the 
rest of the paper. We begin by reformulating our problem 
in Section \ref{sec:reformulation} to take place on a fixed set. 
We parameterize this reformulated problem by a radius $\rho$, 
a translation $\tau$, and a function $w \in \mathcal{C}^{2,\alpha} (\Ss)$. 
Section \ref{sec:preliminary} has some preliminary 
computations, such as expansions of the metric and the Laplace-Beltrami 
operators for our reformulated problem, as well as a study of 
the mapping properties of the Laplace-Beltrami operator on 
certain weighted function spaces in Section \ref{sec:dirichlet}. 
In Section \ref{sec:soln_construction} we construct an approximate
solution $v$, given in \eqref{eq:vv}, and perturb it by a translation to the 
eventual solution $\widehat{u}_{\rho,\tau,w}$, given in \eqref{eq:u-r-w-t}. 
The function $\widehat{u}_{\rho,\tau,w}$ already satisfies most of our desired 
properties: it is harmonic, decays appropriately, and has constant Dirichlet 
data. This it only remains for us to choose parameters $\rho$, $\tau$, 
and $w$ so that $\widehat{u}_{\rho,\tau,w}$ also has constant 
Neumann data. To correctly choose these parameters we 
must invert the Dirichlet-to-Neumann operator of the Laplace-Beltrami 
operator. We do this in two steps, first writing out an expansion of the 
normal derivative of $\widehat{u}_{\rho,\tau,w}$ and performing a 
linear analysis of this expansion in Section \ref{sec:linear}, and then 
completing our nonlinear analysis using the implicit function theorem 
in Section \ref{sec:nonlinear}. 
Finally, in Section \ref{sec:foliation} we show that we do in fact
produce a foliation of the asymptotically flat end.

\bigskip
\noindent \textbf{Acknowledgements}: M. M. F. is partially supported by Alexander van Humboldt Foundation, 
J. R. was partially supported by the National Research Foundation of South Africa, and 
I. A. M. was partially supported by the Abbas Bahri Excellence Fellowship. We completed part of this research 
when M. M. F. visited the University of Cape Town, when J. R. visited the African Institute for Mathematical Sciences
in Mbour, Senegal, and when I. A. M. visited the Department of Mathematics at the University of Rutgers. We 
thank all these institutions for their hospitality. We also thank Murray Christian for enlightening 
conversations when we began this project. 

\section{Reformulation of the problem}\label{sec:reformulation}

In this section we reformulate our problem so that we can solve a
family of PDEs on the fixed Euclidean domain $\R^n \backslash \B$. 
Intuitively, we accomplish three things with this reformulation. First, 
we rescale by $\rho> 0$, which one should take to be large. Second, 
we translate the center of the ball by a small parameter $\tau \in \R^n$. 
Third, we deform the unit sphere $\Ss = \partial \B$ by a function 
$w \in \mathcal{C}^{1,\alpha} (\Ss)$. Putting all these transformations 
together we obtain a parameterization 
\begin{equation}\label{eq:param}
\Phi_{\rho,w,\tau}: \R^n \backslash \B \rightarrow M, \qquad \Phi_{\rho,w,\tau} (x)
= \Phi\left (\rho\tau + \rho x + \rho w \left ( \frac{x}{|x|} \right ) x  \right ) 
\end{equation}
and let $\widehat{\Omega}_{\rho,\tau,w} = \Phi_{\rho,\tau,w} (\R 
\backslash \B)$. 
 Finally we can use $\Phi_{\rho,w,\tau }$ to pull problem \eqref{eq:capacitor1}
back to $\R^n \backslash \B$. Under this change of coordinates, we
have now reformulated our original problem \eqref{eq:capacitor1} as
\begin{align}\label{eq:extremal-citor2}
\begin{cases}
\Delta_{\widehat g} \widehat u_{\rho,w,\tau} =0& \quad \textrm { in } \quad\R^n
\backslash \B \vspace{3mm}\\
  \widehat u_{\rho,w,\tau }  =  1&\quad
\textrm{ on
}\quad \partial \B \vspace{3mm}\\
 \lim_{|x| \rightarrow \infty}
\widehat u_{\rho,w,\tau } (x) = 0& \vspace{3mm}\\
 \dfrac{\partial \widehat
u_{\rho,w,\tau } } {\partial \widehat \eta}  = Const.& \quad\textrm { on }\quad\partial
\B,
\end{cases}
\end{align}where
$\widehat{g} = \Phi^*_{\rho,w,\tau}(g)$, $\widehat \eta$ is the inward pointing 
unit normal to $\Ss = \partial \B$ with respect to the metric $\widehat g$. Both our new metric 
$\widehat g$ and the function $\widehat{u}_{\rho,w, \tau}$ depend on the three parameters
 $\rho \in (0,\infty)$, $w \in \mathcal{C}^{2,\alpha} (\Ss)$, and $\tau \in \R^n$. We 
 should imagine $\rho$ to be large and both $w$ and $\tau$ to be 
 small. So that our parameters are all of the same scale, we require 
 \begin {equation} \label{param-relations}
\| w \|_{\mathcal{C}^{2,\alpha}(\Ss)} = \mathcal{O}(\rho^{-n}), \qquad 
|\tau| = \mathcal{O}(\rho^{-n}) \end {equation}
for the remainder of the paper. 

\section{Preliminary computations} \label{sec:preliminary}
In this section we carry out some preliminary computations, in
preparation for solving \eqref{eq:extremal-citor2} We first write
out a Taylor expansion of the metric $\widehat g$.  \\

\subsection{Notation} 

All our computations in this section are perturbation 
expansions, when $\rho$ is large and 
$|\tau|$ and $\| w \|_{\mathcal{C}^{k,\alpha} (\Ss)}$ 
are small. In the computations below we will sometimes 
wish to extend a function $v$ defined on the sphere to a tubular 
neighborhood, and we do so by making it constant in 
the radial direction, taking $w(x) = w\left ( \frac{x}{|x|} \right )$. 
Similarly, for each $w \in \mathcal{C}^{2,\alpha} ( \Ss )$ 
we let 
\be
\label{eq:defwi-wij}
w_i\left(\frac{x}{|x|}\right)=\de_iw\left(\frac{x}{|x|}\right),
\qquad w_{ij}\left(\frac{x}{|x|}\right)=\de_{ij}
w\left(\frac{x}{|x|}\right) \ee and \be \Delta_0 w= \sum_{i=1}^n w_{ii},
\ee { for all $x\in \R^n\setminus\{0\}$}.

To make the computations below tractable, we adopt the 
following notation throughout the rest of the paper. 

For $i \in \{ 0,1,2\}$ we let $L^i$ denote a linear partial 
differential operator of order $i$ whose coefficients 
depend smoothly on $\rho$ and $x$ and that satisfies 
the bound 
\begin{equation}\label{eq:estiL}
\| L^i(v)|\|_{C^{k,\alpha}(\R^{n}\setminus\B)}\leq
c\|v\|_{C^{k+i,\alpha}(\Ss )}
\end{equation}
for each $v\in
C^{k,\alpha}(\mathbf{S})$, where 
the constant $c>0$ is independent of $\rho$. Similarly we let 
$Q^i$ denote a nonlinear operator of order $i \in \{0,1,2\}$ such 
that $Q^i (0,0) = 0$ and that satisfies the bound 
\begin{eqnarray*} 
\| Q^i(v_1, \tau_1) - Q^i (v_2, \tau_2) \|_{\mathcal{C}^{k, \alpha}(\R \setminus \B)}
 & \leq & c\left ( \| v_1 \|_{\mathcal{C}^{k+i, \alpha}(\Ss)} + \|v_2\|_{\mathcal{C}^{k+i, \alpha}
 (\Ss)} + |\tau_1| + |\tau_2| \right ) \\ 
 & & \quad \times \left ( \| v_1-v_2\|_{\mathcal{C}^{k+i, \alpha}(\Ss)} + |\tau_1 - \tau_2 |
 \right ) \end {eqnarray*}
   provided
$\|v_1\|_{C^{k+i,\alpha}(\Ss  )}+\|v_2\|_{C^{k+i,\alpha}(\Ss
)}+|\tau_1|+|\tau_2|\leq 1$.

Finally we let $P_i$ be a function of the form 
\be \label{eq:def-of-P-i} P_i(\rho,x, v, \tau)=\rho^{1-n}
|x|^{1-n} L^i(v)+Q^i(v,\tau)+\cO(\rho^{-n}|x|^{-n}) \ee 
such that for every $k,\ell,m\in \N$, $x\in \R^n\setminus\B$, $\tau\in
\R^n$ and $\rho>\rho_0>0$ 
\be\label{eq:def-Pi} \|\de_\rho^\ell
\de_v^k \de_\tau^m P_i(\rho, \cdot , v,\tau )  \| \leq c \left(
\rho^{-n-\ell}+ \rho^{1-n- \ell}( \|v \|_{\mathcal{C}^{i,\alpha}(
\Ss )}) + \|v \|_{\mathcal{C}^{i,\alpha}( \Ss )}^2 +| \tau|^2
\right)  , \ee 
for some positive constant depending only on
$h,n,k,\ell,m,i, \alpha$ and $\rho_0$.  For brevity we 
write 
$$
P_i(\rho,x,v)=P_i(\rho,x, v, 0).
$$
It is important to observe that the product of any two terms, 
each of which has the form of either $L^i$ or $Q^i$, has the 
form of $P_i$. 

\subsection{Metric expansions and the Laplacian} 

We have the following
expansions.
\begin{lem} \label{eq:metric} We have\\
\begin{eqnarray}\label{metric-expansion2}
\rho^{-2}\widehat g_{ij} (x)  & = & (1+2w+\sigma  |z|^{1-n})\delta_{ij} + x_i  w_j + x_j  w_i+ P_1(\rho,x,w,\tau)
\end{eqnarray}
and
\begin{eqnarray}\label{metric-expansion3}
 \rho^{2}\widehat g^{ij} (x) & = &(1+2w+\sigma  |z|^{1-n})^{-1} \delta_{ij} - ( x^i w^j + x^j w^i )+
P_1(\rho,x,w,\tau),
\end{eqnarray}
where 
\begin{equation}\label{eqz}
|z|^{1-n}:=\rho^{1-n} r^{1-n} \left ( 1-
\frac{n-1}{r^2} \langle x, \tau \rangle  \right).
\end{equation}
\end{lem}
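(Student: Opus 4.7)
My plan is a direct computation of the pullback metric $\widehat g = \Phi_{\rho,w,\tau}^{*}g$ followed by inversion via a Neumann series. Set
\[
y(x) = \rho\tau + \rho\bigl(1+w(x/|x|)\bigr)x,
\]
so that $\widehat g_{ij}(x) = g_{k\ell}(y(x))\,\partial_i y^k\,\partial_j y^\ell$. Using that the radial extension of $w$ satisfies Euler's identity $\sum_i x^i w_i = 0$, the Jacobian reads $\partial_i y^k = \rho\bigl[(1+w)\delta_{ik} + x^k w_i\bigr]$, and contracting against $\delta_{k\ell}$ gives
\[
\delta_{k\ell}\,\partial_i y^k\,\partial_j y^\ell = \rho^2\bigl[(1+w)^2\delta_{ij} + (1+w)(x_iw_j + x_jw_i) + |x|^2 w_i w_j\bigr].
\]
Every product involving two factors from $\{w, w_k\}$ vanishes quadratically at $w=0$ and is therefore of $Q^1(w,\tau)$ type. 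Expanding $(1+w)^2 = 1 + 2w + Q^1$ and $(1+w)(x_iw_j + x_jw_i) = x_iw_j + x_jw_i + Q^1$ already produces the ``Euclidean'' contribution $\delta_{ij} + 2w\delta_{ij} + x_iw_j + x_jw_i$ of $\rho^{-2}\widehat g_{ij}$ modulo $P_1$.

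Next I handle the conformal factor $1 + \sigma|y|^{1-n}$ and the remainder $h_{k\ell}$. From $|y|^2 = \rho^2\bigl[r^2(1+w)^2 + 2(1+w)\langle x,\tau\rangle + |\tau|^2\bigr]$, Taylor expansion of the $(1-n)/2$-power yields
\[
|y|^{1-n} = \rho^{1-n}r^{1-n}\Bigl(1 - \tfrac{n-1}{r^2}\langle x,\tau\rangle - (n-1)w\Bigr) + Q^0(w,\tau) + \mathcal{O}(\rho^{-n}|x|^{-n}),
\]
and the discrepancy between $\sigma|y|^{1-n}$ and $\sigma|z|^{1-n}$ is the term $-\sigma(n-1)w\,\rho^{1-n}r^{1-n}$, which is exactly of the form $\rho^{1-n}|x|^{1-n}L^0(w)$ and hence absorbed into $P_1$. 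For the tail, the hypothesis $h_{k\ell}(y) = \mathcal{O}(|y|^{-n})$ combined with $|\partial_i y^k| = \mathcal{O}(\rho)$ yields $\rho^{-2}h_{k\ell}(y)\,\partial_iy^k\,\partial_jy^\ell = \mathcal{O}(\rho^{-n}|x|^{-n})$, which is the third admissible summand in $P_1$. Assembling the Euclidean, conformal and remainder contributions proves \eqref{metric-expansion2}.

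For the inverse \eqref{metric-expansion3}, write $\rho^{-2}\widehat g_{ij} = \alpha\delta_{ij} + B_{ij}$ with $\alpha = 1 + 2w + \sigma|z|^{1-n}$ and $B_{ij} = x_iw_j + x_jw_i + P_1$. The scaling relations \eqref{param-relations} make both $\alpha - 1$ and $B_{ij}$ uniformly small on $\R^n\setminus\B$, so a Neumann series in $\alpha^{-1}B$ gives
\[
\rho^2\widehat g^{ij} = \alpha^{-1}\delta_{ij} - \alpha^{-2}(x^iw^j + x^jw^i) + P_1.
\]
Because $\alpha^{-2} = 1 + \mathcal{O}(\rho^{1-n}) + \mathcal{O}(w)$, replacing $\alpha^{-2}$ by $1$ in the middle term costs only another $P_1$, and the claimed formula follows.

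The only real obstacle is bookkeeping: every small term generated along the way must be matched against the template \eqref{eq:def-of-P-i}--\eqref{eq:def-Pi} defining $P_1$, verifying both the derivative order and the scaling. The hypothesis \eqref{param-relations} is precisely what ensures that nonlinear products are controlled by $Q^1(w,\tau)$, while the decay in \eqref{metric-expansion1} supplies the $\mathcal{O}(\rho^{-n}|x|^{-n})$ tail; no further structural difficulty arises.
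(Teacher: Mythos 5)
Your proof is correct and follows essentially the same route as the paper: compute the Jacobian of $\Phi_{\rho,w,\tau}$ via the chain rule, contract against the ambient metric split into its Euclidean, conformal ($\sigma|y|^{1-n}\delta_{k\ell}$), and tail ($h_{k\ell}$) pieces, Taylor-expand $|y|^{1-n}$ and absorb the $(1-n)\rho^{1-n}r^{1-n}w$ term into $L^0(w)$, and obtain the inverse via a Neumann series. You are a bit more explicit than the paper on the inversion step (the paper merely asserts \eqref{metric-expansion3} follows from \eqref{metric-expansion2}), but this is a filling-in of detail rather than a different argument.
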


\begin {proof} 
Letting $\{ e_1, \dots, e_n\}$ be the standard orthonormal basis for $\R^n$, we see  
\begin{eqnarray*} 
D\Phi_{\rho,w,\tau} (e_i)  & = &  \sum_k D\Phi(e_k) \left ( \left ( \rho + \rho w \left ( \frac{x}{|x|} \right ) 
\right ) \delta_{ik} + \rho x^k w_k \right ) \\ 
& = & \rho \left ( \left ( 1 + w \left ( \frac{x}{|x|} \right ) \right ) D\Phi(e_i) + \sum_k x^k w_k, D\Phi(e_k) 
\right ), 
\end{eqnarray*} 
where we evaluate derivatives of $\Phi$ at $\rho \tau + \rho x + \rho w(x/|x|) x$. 
Hence
\begin{eqnarray} \label{eq:metric2} 
\rho^{-2} \widehat g_{ij} &= &\left ( \left ( 1+w\left ( \frac{x}{|x|} \right ) \right ) D\Phi(e_i) + \sum_k 
x^k w_k D\Phi(e_k) \right )  \\ \nonumber 
&& \times \left ( \left ( 1+ w\left ( \frac{x}{|x|} \right ) \right ) D\Phi(e_j) + \sum_l 
x^l w_l D\Phi(e_l) \right ) \\ \nonumber 
& = & (1+w) g_{ij} + (1+w) w_j x^l g_{il} + (1+w) w_i x^k g_{jk} + w_i w_j x^k x^l g_{kl} .
\end{eqnarray} 
Next, we  write $|x| = r$ and 
$$y = \rho \left ( 1+ w \left ( \frac{x}{|x|} \right ) \right ) x + \rho \tau,$$
so that
\begin {align} \label {expnormy}
|y|^2   = & \rho^2 r^2 \left ( 1+ 2w + \frac{2}{r^2} \langle x,
\tau \rangle  +w^2+  2r^{-2} w \langle x, \tau
\rangle  + r^{-2}\mathcal{|\tau|}^2 \right ) \\ \nonumber
|y|^{1-n} &=\rho^{1-n} r^{1-n} \left ( 1-(n-1) w -
\frac{n-1}{r^2} \langle x, \tau \rangle   + w^2+
r^{-2} w \langle x, \tau \rangle  +
r^{-2}\mathcal{|\tau|}^2 + \cdots  \right)\\ \nonumber
& =\rho^{1-n} r^{1-n} \left ( 1-
\frac{n-1}{r^2} \langle x, \tau \rangle  \right)+\rho^{1-n} r^{1-n}L^0(w,\tau)+ Q^0(w,\tau) . \\ \nonumber
\end {align}
Observe that we absorb the term $(1-n) \rho^{1-n} r^{1-n} w$ above into $L^0(w)$, while the corresponding linear term  with respect to  $\tau$ is kept. Indeed 
when solving the nonlinear equation for  small $\rho$ in Section \ref{sec:nonlinear}, we  have to replace  $w$  with $\rho^{n-1} w$ , which increases the power of $\rho$ in $(1-n) \rho^{n-1} r^{1-n} w$ by  $n-1$. 

Using  \eqref{metric-expansion1} and  \eqref{eq:defwi-wij}, it follows from \eqref{eq:metric2} 
and  \eqref{expnormy} that  both \eqref{metric-expansion2} and \eqref{metric-expansion3} hold.
\end {proof}
In the next sections, we will work with the metric \begin{equation}\label{eq:chanageof metric}
g_{\rho}:=\rho^{-2}\hat{g}. 
\end{equation}

\begin{lem}\label{cor:exp-Lap-hat-g}
For $\rho$ sufficiently large, 
\begin{align} 
 \Delta_{g_{\rho}} & = 
(1-2w-\sigma |z|^{1-n})\Delta_{0}
 - \biggl [  x_i w_j
 + x_j w_i + P_1(\rho,x,w,\tau) \biggl] \partial^{2}_{ij}\nonumber\\
 &-\biggl [ \biggl(3w_j- \frac{n-2}{2} \sigma  \partial_j |z|^{1-n} \biggl)+x_iw_{ij}+[\Delta_{0}w]
x_j+ P_2(\rho,x,w,\tau)\biggl]\partial_j, \nonumber
\end{align}
where $\Delta_0$ is the usual flat Laplacian.

Moreover, 
$$\partial_j |z|^{1-n}=-(n-1)\rho^{1-n} r^{-1-n} x_j\left ( 1 -
\frac{n-3}{r^2} \langle x, \tau \rangle \right)\\
-(n-1)\rho^{1-n} r^{-1-n} \tau_j. $$
\end{lem}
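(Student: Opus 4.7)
My plan is to apply the intrinsic formula
$$\Delta_{g_\rho}u = g_\rho^{ij}\partial_{ij}^2 u + \Bigl(\partial_i g_\rho^{ij} + \tfrac{1}{2}g_\rho^{ij}\partial_i\log\det g_\rho\Bigr)\partial_j u$$
to the expansions of $g_{\rho,ij}$ and $g_\rho^{ij}$ supplied by Lemma \ref{eq:metric}, tracking each term against the $P_i$ bookkeeping scheme of \eqref{eq:def-Pi}. For the second-order part I substitute \eqref{metric-expansion3} and Taylor-expand $(1+2w+\sigma|z|^{1-n})^{-1}=1-2w-\sigma|z|^{1-n}+R$. Every term in $R$ is either quadratic in $w$ (hence a $Q^0$-type contribution) or involves $\sigma^2|z|^{2(1-n)}=\cO(\rho^{2(1-n)}|x|^{-2(n-1)})=\cO(\rho^{-n}|x|^{-n})$ for $n\geq 2$; such products fit into $P_1$ and already reproduce the first displayed line of the lemma.

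For the first-order coefficient the determinant is the key ingredient. I decompose $g_{\rho,ij}=A_{ij}+B_{ij}$ with $A_{ij}=(1+2w+\sigma|z|^{1-n})\delta_{ij}$ and $B_{ij}=x_iw_j+x_jw_i+P_1$, so that $\det g_\rho = \det A\,\bigl(1+\operatorname{tr}(A^{-1}B)+\cO(B^2)\bigr)$. The crucial simplification is that $\operatorname{tr}(A^{-1}B)$ is a positive scalar multiple of $\sum_i x^iw_i$, which vanishes by Euler's identity because $w$ is the radial extension of a function on $\Ss$ and is therefore homogeneous of degree $0$. Consequently $\log\det g_\rho = n\log(1+2w+\sigma|z|^{1-n})+P_1$, and differentiating yields $\partial_i\log\det g_\rho = 2nw_i+n\sigma\partial_i|z|^{1-n}+P_2$.

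A direct differentiation of \eqref{metric-expansion3} gives
$$\partial_i g_\rho^{ij}=-2w_j-\sigma\partial_j|z|^{1-n}-nw_j-x_iw_{ij}-w_j-x_j\Delta_0 w+P_2,$$
where $-nw_j-x_iw_{ij}$ comes from $-\partial_i(x^iw^j)$ (using $\partial_i x^i=n$) and $-w_j-x_j\Delta_0 w$ from $-\partial_i(x^jw^i)$. Combining with $\tfrac12 g_\rho^{ij}\partial_i\log\det g_\rho = nw_j+\tfrac{n}{2}\sigma\partial_j|z|^{1-n}+P_2$, in which the rank-one cross-contributions are once again annihilated by Euler's identity, produces the decisive cancellation $-nw_j+nw_j=0$, while the $\sigma$-coefficients combine as $-1+\tfrac{n}{2}=\tfrac{n-2}{2}$. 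This matches the stated first-order line exactly. The companion formula for $\partial_j|z|^{1-n}$ is then a routine computation from \eqref{eqz}, using $\partial_j r^{1-n}=-(n-1)r^{-n-1}x_j$, $\partial_j r^{-1-n}=-(n+1)r^{-n-3}x_j$ and $\partial_j\langle x,\tau\rangle=\tau_j$, and collecting the pieces linear in $\tau$.

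The main obstacle is not any single step but the systematic bookkeeping: at every juncture one must confirm that cross-products such as $w\cdot\sigma|z|^{1-n}$, $w\cdot w_i$, $\sigma|z|^{1-n}\cdot w_i$, or derivatives of the implicit $P_1$ remainder from Lemma \ref{eq:metric} satisfy the structural bound \eqref{eq:def-Pi} for $P_1$ or $P_2$, making essential use of the scaling assumption \eqref{param-relations}. The Euler identity $\sum_i x^iw_i\equiv 0$ must be invoked at several places to kill would-be leading-order contributions of size $\rho^{1-n}$ or $\rho^{1-n}|x|^{1-n}$; without it the clean form asserted in the lemma would fail.
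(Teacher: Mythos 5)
Your proof is correct and follows essentially the same route as the paper: both start from the divergence form $\Delta_{g_\rho} u = |g_\rho|^{-1/2}\partial_i\bigl(g_\rho^{ij}|g_\rho|^{1/2}\partial_j u\bigr)$ and substitute the expansions of Lemma \ref{eq:metric}. The paper forms the product $g_\rho^{ij}\sqrt{|g_\rho|}$ and differentiates once; you instead split the first-order coefficient into $\partial_i g_\rho^{ij} + \tfrac12 g_\rho^{ij}\partial_i\log\det g_\rho$, which is algebraically identical. Your explicit appeal to the Euler identity $\sum_i x_i w_i \equiv 0$ (for the degree-$0$ homogeneous extension of $w$) makes visible a cancellation that the paper exploits silently when computing $\det g_\rho$, so if anything your bookkeeping is a bit cleaner. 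The term-by-term tallies ($-3w_j$, $\tfrac{n-2}{2}\sigma\partial_j|z|^{1-n}$, $-x_iw_{ij}$, $-x_j\Delta_0 w$, with the remainder folded into $P_2$) match the displayed expansion exactly.

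One item deserves a closer look. You dismiss the companion formula for $\partial_j|z|^{1-n}$ as a routine differentiation of \eqref{eqz}, correctly citing $\partial_j r^{1-n}=-(n-1)r^{-n-1}x_j$ and $\partial_j r^{-1-n}=-(n+1)r^{-n-3}x_j$. But if you actually carry out the product rule on $|z|^{1-n}=\rho^{1-n}r^{1-n}-(n-1)\rho^{1-n}r^{-1-n}\langle x,\tau\rangle$ you obtain
\begin{equation*}
\partial_j|z|^{1-n}=-(n-1)\rho^{1-n}r^{-1-n}x_j\Bigl(1-\frac{n+1}{r^2}\langle x,\tau\rangle\Bigr)-(n-1)\rho^{1-n}r^{-1-n}\tau_j,
\end{equation*}
i.e.\ the coefficient in the parenthesis is $n+1$, not the $n-3$ that the lemma displays. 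So your outlined rules, pushed to completion, disagree with the stated constant. You should not certify this as "routine" without noticing the discrepancy, and you should track how the choice of constant propagates into Lemma \ref{eq:lapv} and Proposition \ref{eq:diffG} before accepting either value.
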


\begin {proof} 
Recall that the Laplace-Beltrami operator has the form 
$$\Delta_{g_{\rho}} u = \frac{1}{\sqrt {|g_{\rho}|}} \partial_i g_{\rho}^{ij} \sqrt{ |g_{\rho}|}
\partial_j u, \qquad |g_{\rho}| = \det (g_{\rho}). $$

Then using  \eqref{metric-expansion2} and \eqref{metric-expansion3}, we have
\begin {eqnarray*} 
|g_{\rho}| & = &1+2nw+ n\sigma  |z|^{1-n} + P_1(\rho,x,w,\tau)\\ 
\sqrt{|g_{\rho}|} & = &1+nw+ \frac{n\sigma}{2} |z|^{1-n}+ P_1(\rho,x,w,\tau) \\ 
\frac{1}{\sqrt{|g_{\rho}|}} & = &1-nw-\frac{n\sigma}{2} |z|^{1-n}+ P_1(\rho,x,w,\tau)\\
g_{\rho}^{ij} \sqrt{|g_{\rho}|}&  = & \left [ \left ( 1+ (n-2) w +  \frac{n-2}{2} \sigma  |z|^{1-n} \right ) \delta_{ij} - \left ( w^ix^j+  w^jx^i \right ) + P_1(\rho,x,w, \tau)  \right ] \\
\end {eqnarray*}

\begin {eqnarray*}
\partial_i (g_{\rho}^{ij} \sqrt{|g_{\rho}|}) & = &  
\biggl((n-3)w_i + \frac{n-2}{2} \sigma  \partial_i |z|^{1-n}\biggl)\delta_{ij} - w_{ii} x_j  - w_{ij} x_i-w_j   + P_2 (\rho,x,w, \tau)  \\ 
\frac{1}{\sqrt{|\tilde{g}|}}\partial_i (\tilde{g}^{ij} \sqrt{|\tilde{g}|}) & = &  
\biggl((n-3)w_i + \frac{n-2}{2} \sigma  \partial_i |z|^{1-n}\biggl)\delta_{ij} - w_{ii} x_j  - w_{ij} x_i-w_j   + P_2 (\rho,x,w, \tau)  \\ 
\end {eqnarray*}

\begin {eqnarray*}
\Delta_{g_{\rho}} u & = & g_{\rho}^{ij}  \partial_i \partial_j u + \frac{1}{\sqrt{|g_{\rho}|}} 
\partial_i (g_{\rho}^{ij} \sqrt{|g_{\rho}|}) \partial_j u \\ 
& = & \left [ (1-2w - \sigma |z|^{1-n}) \delta_{ij} - (x^i w^j + x^j w^i) 
+ P_1(\rho,x,w,\tau)\right ] \partial_i \partial_j u \\ 
&-&\left [ \biggl(3w_i- \frac{n-2}{2} \sigma  \partial_i |z|^{1-n} \biggl)\partial_i u
- (x^j \Delta_0 w +x_i w_{ij})\partial_j u\right ] +  P_2(\rho, x, w, \tau)\partial_j u.
\end {eqnarray*} 
In addition we have from \eqref{eqz}
\begin {align}\partial_j |z|^{1-n}=&-(n-1)\rho^{1-n} r^{-1-n} x_j\left ( 1 -
\frac{n-3}{r^2} \langle x, \tau \rangle \right)-(n-1)\rho^{1-n} r^{-1-n} \tau_j,
\end {align}
which yield the expansions in Lemma \ref{cor:exp-Lap-hat-g}. 
\end {proof} 

\subsection{Weighted spaces } \label{sec:dirichlet}

The best setting in which to perform our linear analysis is that of
weighted H\"older spaces. Following Pacard and Rivi\`ere, we use the
following definition.
\begin {defn}
Let $\nu \in \R$, $k \in \mathbb{N}$ and $0 < \alpha < 1$. Then we say $u \in
\mathcal{C}^{k,\alpha}_\nu( \R^n \backslash \B)$ if $u \in
\mathcal{C}^{k,\alpha}_{\textrm{loc}}(\R^n \backslash \B)$ and
$$\| u \|_{k,\alpha,\nu} = \sup_{s > 1} \left ( s^{-\nu} [ u ]_{k,\alpha, s}  \right ) < \infty.$$ Here
$$ [ u ]_{k,\alpha, s} := \sum_{i = o}^k s^i \sup_{A_s} |\nabla^i u| + s^{k + \alpha}\sup_{x,x' \in A_s}\frac{|\nabla^k u(x) - \nabla^k u(x')|}{|x - x'|^{\alpha}}, $$ where
$A_s = \{ x \in \R^n : s< |x|< 2s\}$.
We denote the space of functions vanishing on the boundary by
$$ \mathcal{C}^{k,\alpha}_{\nu, \mathcal{D}}( \R^n \backslash \B) := \left \{ u \in \mathcal{C}^{k,\alpha}_\nu( \R^n \backslash \B): u|_{\partial B} = 0 \right \}. $$
\end {defn}
Intuitively, one can think of $\mathcal{C}^{0,\alpha}_\nu (\R^n
\backslash \B)$ as those functions which grow at most like $|x|^\nu$
when $|x|$ is large.

\begin {rmk}Pacard and Rivi\`ere perform their analysis on weighted H\"older spaces on $\B\backslash \{ 0 \}$, whereas we want to examine functions on $\R^n \backslash \B$.
It is straight-forward to transfer between the two settings using the Kelvin
transform $\mathbb{K}$, defined by
\begin {equation} \label {defn-kelvin-trans}
\mathbb{K}: \mathcal{C}^{k,\alpha}_\nu (\B \backslash \{ 0 \} ) \rightarrow
\mathcal{C}^{k,\alpha}_{2 -n -\nu} (\R^n \backslash \B), \quad
\mathbb{K} (u) (x) = |x|^{2-n} u \left ( \frac{x}{|x|^2} \right ) .\end {equation}
It will be convenient to also note the transformation law
\begin {equation} \label{kelvin-trans-law}
\Delta_0 \left ( \mathbb{K}(u) \right ) (x) = |x|^{-4} \mathbb{K} (\Delta_0 u)
(x). \end {equation}
\end {rmk}

One can show the following theorem (see Section 2.2 of \cite {PR}).
\begin {thm}
\label{th::inv-D_0}
 The mapping
$$\Delta_0 : \mathcal{C}^{2,\alpha}_{\nu, \mathcal{D}}(\R^n \backslash \B) \rightarrow
\mathcal{C}^{0,\alpha}_{\nu-2}(\R^n \backslash \B)$$ is injective if
$\nu < 0$ and surjective if $\nu > 2-n$.
\end {thm}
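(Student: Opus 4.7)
My plan is to deduce this mapping property from the analogous result on the punctured ball $\B \setminus \{0\}$ established by Pacard--Rivi\`ere in Section~2.2 of \cite{PR}, by transferring via the Kelvin transform $\mathbb{K}$ introduced in \eqref{defn-kelvin-trans}. First, I would verify that $\mathbb{K}$ is a bounded isomorphism
\[
\mathbb{K} : \mathcal{C}^{k,\alpha}_{\mu,\mathcal{D}}(\B \setminus \{0\}) \longrightarrow \mathcal{C}^{k,\alpha}_{2-n-\mu,\mathcal{D}}(\R^n \setminus \B).
\]
The weight shift $\mu \mapsto 2-n-\mu$ is forced by the prefactor $|x|^{2-n}$ in the definition of $\mathbb{K}$ together with the fact that the inversion $x \mapsto x/|x|^2$ sends neighborhoods of $0$ to neighborhoods of $\infty$. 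The Dirichlet subspace is preserved because the unit sphere is fixed pointwise by this inversion.

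Combining this with the transformation law \eqref{kelvin-trans-law}, a short weight bookkeeping shows that for $u = \mathbb{K}(v)$ and $\nu = 2-n-\mu$ one has $\Delta_0 u \in \mathcal{C}^{0,\alpha}_{\nu-2}(\R^n \setminus \B)$ if and only if $\Delta_0 v \in \mathcal{C}^{0,\alpha}_{\mu-2}(\B \setminus \{0\})$. Consequently the theorem is equivalent to the assertion that
\[
\Delta_0 : \mathcal{C}^{2,\alpha}_{\mu,\mathcal{D}}(\B \setminus \{0\}) \longrightarrow \mathcal{C}^{0,\alpha}_{\mu-2}(\B \setminus \{0\})
\]
is injective for $\mu > 2-n$ and surjective for $\mu < 0$, which is exactly what Pacard--Rivi\`ere prove. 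The critical exponents $0$ and $2-n$ appear as the two indicial roots of $\Delta_0$ at the puncture, corresponding to the radial homogeneous solutions $1$ and $|y|^{2-n}$.

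At the level of a self-contained argument, the injectivity half for $\nu < 0$ is immediate from the exterior maximum principle: any harmonic $u \in \mathcal{C}^{2,\alpha}_{\nu,\mathcal{D}}(\R^n \setminus \B)$ vanishes on $\partial \B$ and decays to zero at infinity, hence vanishes identically. The main obstacle is the surjectivity for $\nu > 2-n$. The natural route is to decompose both the right-hand side $f$ and the sought solution $u$ into spherical harmonics $Y_j$, reducing the problem to a countable family of radial ODEs whose indicial exponents at infinity are $j$ and $-(j+n-2)$. The Dirichlet condition at $r = 1$ selects a unique admissible solution in each mode; the hypothesis $\nu > 2-n$ is sharp precisely because it is the slower exponent for the $j = 0$ mode, and one then has to reassemble the modes with uniform weighted Schauder estimates to obtain a bounded right inverse. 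For the writeup, however, the cleanest route is the Kelvin transform reduction above.
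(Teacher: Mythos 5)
Your approach is exactly the one the paper takes: it cites Section~2.2 of Pacard--Rivi\`ere for the analogous statement on the punctured ball and, in the preceding remark, invokes the Kelvin transform \eqref{defn-kelvin-trans} together with the transformation law \eqref{kelvin-trans-law} to transfer the result to $\R^n\setminus\B$. Your weight bookkeeping ($\nu=2-n-\mu$, hence $\nu<0\Leftrightarrow\mu>2-n$ and $\nu>2-n\Leftrightarrow\mu<0$) is correct, so the reduction is sound.
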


The mapping properties of $\Delta_0$ change when the weight $\nu$
crosses over one of the indicial roots $\gamma_j^\pm$, where
$$\gamma_j^\pm = \frac{2-n}{2} \pm \sqrt{ \frac{(n-2)^2}{4} + \lambda_j}.$$
and $\lambda_j$ is  the $j$th eigenvalue of the Laplace-Beltrami
operator on the sphere. Thus one can recover the indicial roots
$\gamma_j^\pm$ as growth/decay rates of solutions to the ODE
$$w'' + \frac{n-1}{r} w' - \frac{\lambda_j}{r^2} w = 0, \qquad
    w= r^{\gamma_j^\pm} .$$

A slightly more refined analysis uncovers the following theorem.
\begin {thm} Let $\nu < 0$ with $\nu \not \in \{ \gamma_j^\pm : j \in \mathbf{N}
\}$, and let $j_0$ be the least non-negative integer such that $\nu
> \gamma_{j_0}^-$. Then the cokernel of the mapping
$$\Delta_0 : \mathcal{C}^{2,\alpha}_{\nu, \mathcal{D}} (\R^n \backslash \B) \rightarrow
\mathcal{C}^{0,\alpha}_{\nu-2} (\R^n \backslash \B)$$ has dimension
$j_0$. Alternatively let $\nu > 2-n$ with $\nu \not \in \{
\gamma_j^\pm : j \in \mathbf{N} \}$, and let $j_0$ be the least
positive integer such that $\nu < \gamma_{j_0}^+$ then kernel of the
mapping
$$\Delta_0 : \mathcal{C}^{2,\alpha}_{\nu, \mathcal{D}} (\R^n \backslash \B) \rightarrow
\mathcal{C}^{0,\alpha}_{\nu-2} (\R^n \backslash \B)$$ has dimension
$j_0$. \end {thm}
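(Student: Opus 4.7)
The plan is to reduce to a mode-by-mode analysis of radial ODEs via spherical harmonic decomposition, and then to recover the cokernel count from the kernel count using the Kelvin transform \eqref{defn-kelvin-trans}. Let $\{\phi_j\}$ be an $L^2(\Ss)$-orthonormal basis of eigenfunctions for $-\Delta_\Ss$ with eigenvalues $\{\lambda_j\}$ listed with multiplicity, and write $u(r\theta)=\sum_j u_j(r)\phi_j(\theta)$ and $f(r\theta)=\sum_j f_j(r)\phi_j(\theta)$. Since $\Delta_0=\partial_r^2+\tfrac{n-1}{r}\partial_r+\tfrac{1}{r^2}\Delta_\Ss$, the equation $\Delta_0 u=f$ decouples into the family of ODEs
\begin{equation*}
\mathcal{L}_j u_j := u_j'' + \frac{n-1}{r}u_j' - \frac{\lambda_j}{r^2} u_j = f_j(r), \qquad r>1,\quad u_j(1)=0,
\end{equation*}
whose homogeneous solutions are spanned by $r^{\gamma_j^+}$ and $r^{\gamma_j^-}$. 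The weighted seminorm defining $\mathcal{C}^{k,\alpha}_\nu$ translates, via standard uniform spherical harmonic estimates, into mode-by-mode control of each $u_j$, reducing the problem to a count of radial ODE solutions.

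For the kernel count, assume $\nu>2-n$ with $\nu\notin\{\gamma_j^\pm\}$. Any $u\in\ker\Delta_0$ with $u|_{\partial\B}=0$ has each mode of the form $u_j=c_j(r^{\gamma_j^+}-r^{\gamma_j^-})$. Since $\gamma_j^-\leq\gamma_0^-=2-n<\nu$ for every $j$, membership in $\mathcal{C}^{2,\alpha}_\nu$ is governed solely by the larger exponent: a nonzero $u_j$ is admissible exactly when $\gamma_j^+<\nu$. By the definition of $j_0$ the number of such indices is $j_0$, so $\dim\ker\Delta_0=j_0$.

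For the cokernel count, assume $\nu<0$ with $\nu\notin\{\gamma_j^\pm\}$. The cleanest route is via the Kelvin transform $\mathbb{K}$, which by \eqref{kelvin-trans-law} intertwines $\Delta_0$ with itself up to a conformal factor and induces an isomorphism
\begin{equation*}
\mathbb{K}:\mathcal{C}^{2,\alpha}_{\nu,\mathcal{D}}(\R^n\setminus\B)\longrightarrow\mathcal{C}^{2,\alpha}_{2-n-\nu,\mathcal{D}}(\B\setminus\{0\})
\end{equation*}
(using $\gamma_j^++\gamma_j^-=2-n$, so the roles of $\gamma_j^\pm$ are swapped). Since $2-n-\nu>2-n$ when $\nu<0$, the cokernel count on $\R^n\setminus\B$ reduces to the analogous count on the punctured ball, which is exactly Pacard and Rivi\`ere's formula in \cite{PR}. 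Alternatively, one works directly on the exterior: variation of parameters on $\mathcal{L}_j u_j=f_j$ yields a particular solution lying in $\mathcal{C}^{2,\alpha}_\nu$ if and only if, for each mode with $\gamma_j^->\nu$, an orthogonality condition of the form $\int_1^\infty s^{n-1-\gamma_j^-}f_j(s)\,ds=0$ is met, contributing $j_0$ independent obstructions.

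The main obstacle is upgrading the formal mode-by-mode analysis to a bounded right inverse between the weighted H\"older spaces: one must show that, once the orthogonality conditions hold, the variation-of-parameters integrals produce a particular solution whose $\mathcal{C}^{2,\alpha}_\nu$-seminorm is controlled by that of $f$, and that the spherical harmonic series converges in the target space. Standard Green's-function Schauder estimates handle the former, while uniform bounds on spherical harmonics handle the latter; routing through the Kelvin transform lets us inherit these estimates directly from the punctured-ball analysis already carried out in \cite{PR}.
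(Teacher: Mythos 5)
The paper does not actually prove this theorem: immediately after the statement it writes ``Again, we refer the reader to Section 2.2 of \cite{PR} for details,'' so there is no ``paper's own proof'' to compare against. Your proposal fills in what the paper delegates to Pacard--Rivi\`ere, and your route -- decomposition into spherical harmonics, analysis of the resulting radial ODEs with indicial roots $\gamma_j^\pm$, and the Kelvin transform to trade the exterior problem for the punctured-ball problem already treated in \cite{PR} -- is precisely the approach the paper's own Remark on the Kelvin transform (equations \eqref{defn-kelvin-trans}--\eqref{kelvin-trans-law}) is pointing at, so it is the right thing to do. Your kernel count is correct: the Dirichlet-harmonic modes are $c_j(r^{\gamma_j^+}-r^{\gamma_j^-})\phi_j$, and since $\gamma_j^-\le 2-n<\nu$ always holds the $r^{\gamma_j^-}$ part is harmless, so membership in $\mathcal{C}^{2,\alpha}_\nu$ is indeed decided by whether $\gamma_j^+<\nu$. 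The Kelvin-transform reduction for the cokernel is also sound, because $\mathbb{K}$ is an involution, $\gamma_j^+ + \gamma_j^- = 2-n$, and the conformal factor in \eqref{kelvin-trans-law} shifts the weight by exactly $-2$, so the exterior problem at weight $\nu$ is isomorphic to the punctured-ball problem at weight $2-n-\nu$.

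One small inaccuracy worth flagging in your ``alternative'' direct computation: the obstruction integral is not $\int_1^\infty s^{\,n-1-\gamma_j^-}f_j(s)\,ds$. Working out the variation-of-parameters particular solution (or, more cleanly, integrating $\mathcal{L}_j u_j = f_j$ against the Dirichlet-harmonic mode $r^{\gamma_j^+}-r^{\gamma_j^-}$ with respect to $r^{n-1}dr$, which is the form for which both boundary contributions vanish when $\nu<\gamma_j^-$), the correct condition is
\[
\int_1^\infty s^{n-1}\left(s^{\gamma_j^+}-s^{\gamma_j^-}\right) f_j(s)\,ds = 0,
\]
i.e.\ $f_j$ must annihilate the Dirichlet harmonic function in the dual pairing. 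Your exponent $n-1-\gamma_j^-$ is off by $n-2$ from the dominant term $n-1+\gamma_j^+ = 1-\gamma_j^-$. This doesn't affect the dimension count -- there is still exactly one linear obstruction per mode with $\gamma_j^->\nu$, yielding $j_0$ in total -- but if you want the direct ODE argument to stand on its own rather than just citing \cite{PR}, the pairing should be stated correctly. The genuinely technical part you rightly identify -- upgrading the mode-by-mode analysis to a bounded right inverse in the weighted H\"older norm -- is the content of Pacard--Rivi\`ere's Section~2.2, and routing through the Kelvin transform is the cleanest way to inherit it.
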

Again, we refer the reader to Section 2.2 of
\cite{PR} for details.\\

Replacing $\rho$ by $1/\rho$ in Lemmas \ref{eq:metric}- \ref{cor:exp-Lap-hat-g}  and keeping the notation  $g_{\rho}$ for the metric $g_{1/\rho}$, we have the following result. 
 \begin{lem}\label{lem:Oper-inv}
 There exist $\rho_0>0$ and $c_0>0$ such that  the map $$\cL: (0,\rho_0)\times \B_{c_0}(0)\times  
 \B_{c_0}(0)\times \mathcal{C}^{2,\alpha}_{\nu, \mathcal{D}}(\R^n \backslash \B) \to  
 \mathcal{C}^{0,\alpha}_{\nu }(\R^n \backslash \B) $$ defined by
$$
\cL(\rho, w,\tau, u):=\Delta_{g_{\rho}} u
$$
is well defined and smooth.

Furthermore, for every $(\rho, w,\tau)\in (0,\rho_0) \times
(\B_{c_0}(0))^2$, the linear map
$$\cL_{\rho,w,\tau}: \mathcal{C}^{2,\alpha}_{\nu, \mathcal{D}}(\R^n \backslash \B) \to \mathcal{C}^{0,\alpha}_{\nu }(\R^n \backslash
\B), \quad u\mapsto\cL_{\rho,w,\tau}(u):= \cL(\rho, w,\tau,  u )$$ is
invertible
 and for all $u\in \mathcal{C}^{2,\alpha}_{\nu, \mathcal{D}}(\R^n
\backslash \B)$ we have the inequalities 
\begin{equation}\label{eq:invertcomp}
C\| u\|_{ \mathcal{C}^{2,\alpha}_{\nu }(\R^n \backslash \B) }\leq
\|\cL_{\rho,w,\tau} (u)\|_{ \mathcal{C}^{0,\alpha}_{\nu }(\R^n
\backslash \B) }\leq C^{-1} \| u\|_{ \mathcal{C}^{2,\alpha}_{\nu
}(\R^n \backslash \B) },
\end{equation}
where $C>0$ is independent of $(\rho,w, \tau).$
 \end{lem}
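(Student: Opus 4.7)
The plan is to treat $\cL_{\rho,w,\tau}$ as a perturbation of the flat Laplacian $\Delta_0$ on weighted H\"older spaces, then combine the Pacard--Rivi\`ere Fredholm theory (Theorem~\ref{th::inv-D_0}) with a Neumann-series argument. I split the argument into three stages.

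First I would verify that $\cL$ is well-defined and smooth between the stated Banach spaces. The explicit formula in Lemma~\ref{cor:exp-Lap-hat-g} presents $\Delta_{g_\rho}$ as $\Delta_0$ plus a sum of terms whose coefficients are built from $w$, $\nabla w$, $\Delta_0 w$, $\sigma|z|^{1-n}$, $\partial_j|z|^{1-n}$, and the schematic remainders $P_i(\rho,x,w,\tau)$ from \eqref{eq:def-of-P-i}. After the substitution $\rho\mapsto 1/\rho$ indicated before the lemma, every perturbative coefficient carries a positive power of $\rho$ and a factor of $|x|^{-k}$ for some $k\geq 1$; the bounds \eqref{eq:estiL} on $L^i$, the nonlinear bound on $Q^i$, and \eqref{eq:def-Pi} on $P_i$ then show that each coefficient acts as a bounded multiplier on the relevant weighted H\"older spaces and that $(\rho,w,\tau)\mapsto \cL_{\rho,w,\tau}u$ is smooth in the Banach-space sense. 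The Leibniz rule in weighted H\"older norms is routine here.

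Second I would analyze the base case $(\rho,w,\tau)=(0,0,0)$. Lemma~\ref{cor:exp-Lap-hat-g} then collapses to $\cL_{0,0,0}=\Delta_0$. Choosing the weight $\nu$ to lie in $(2-n,0)$ and away from the indicial roots $\{\gamma_j^\pm\}$, Theorem~\ref{th::inv-D_0} combined with the refined dimension count in the following theorem shows that $\Delta_0$ is simultaneously injective and surjective between the chosen weighted H\"older spaces, hence an isomorphism by the open mapping theorem.

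Finally I would conclude by perturbation. Writing $\cL_{\rho,w,\tau}=\Delta_0+\mathcal{R}_{\rho,w,\tau}$, the smoothness statement from the first step together with $\mathcal{R}_{0,0,0}=0$ gives
\[
\|\mathcal{R}_{\rho,w,\tau}\|_{\mathcal{C}^{2,\alpha}_{\nu,\mathcal{D}}\to\mathcal{C}^{0,\alpha}_{\nu}}\longrightarrow 0
\qquad\text{as }(\rho,w,\tau)\to(0,0,0).
\]
For $\rho_0$ and $c_0$ sufficiently small the operator $I+\Delta_0^{-1}\mathcal{R}_{\rho,w,\tau}$ is therefore invertible by a Neumann series, whence $\cL_{\rho,w,\tau}=\Delta_0(I+\Delta_0^{-1}\mathcal{R}_{\rho,w,\tau})$ is invertible with a uniform two-sided bound, which is exactly \eqref{eq:invertcomp}. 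The main obstacle is the bookkeeping in the first step: one must check that every piece of the Laplacian expansion really acts boundedly between the correct weighted spaces and that the parameter dependence is genuinely $\mathcal{C}^\infty$ in the operator norm, not just pointwise. Once this is in hand the rest is a standard application of Pacard--Rivi\`ere Fredholm theory together with the implicit/Neumann perturbation.
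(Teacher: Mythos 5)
Your argument is correct and rests on the same foundation as the paper's proof, namely the Pacard--Rivi\`ere isomorphism property of $\Delta_0$ between the weighted H\"older spaces (Theorem~\ref{th::inv-D_0}) together with smoothness of $(\rho,w,\tau)\mapsto\Delta_{g_\rho}$ extracted from Lemma~\ref{cor:exp-Lap-hat-g}. The only substantive difference is in how the two arguments upgrade from the base case $(\rho,w,\tau)=(0,0,0)$: you split $\cL_{\rho,w,\tau}=\Delta_0+\mathcal{R}_{\rho,w,\tau}$ and invert by a Neumann series, whereas the paper fixes a right-hand side $f$, sets $\cF(\epsilon,w,\tau,u)=\cL(|\epsilon|,w,\tau,u)-f$, and invokes the implicit function theorem at the base solution $u_0=\Delta_0^{-1}f$. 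Your route is the more direct one for a family of \emph{linear} operators: the Neumann series works at the operator level and delivers the uniform two-sided bound \eqref{eq:invertcomp} in one stroke, independently of any particular $f$. The paper's IFT formulation a priori produces neighborhoods of $(0,0,0)$ and of $u_0$ that depend on the chosen $f$, and only yields uniform constants after observing (as the paper does implicitly) that everything is linear in $u$; at that point one has effectively re-derived the Neumann-series estimate. The replacement of $\rho$ by $|\epsilon|$ in the paper is purely technical, to apply the IFT on an open neighborhood of the origin, and has no analogue in your version.

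One small point worth flagging: for the Neumann series to make sense you need $\Delta_0^{-1}$ to be a bounded operator from the codomain of $\cL$ into $\mathcal{C}^{2,\alpha}_{\nu,\mathcal{D}}(\R^n\backslash\B)$, which by Theorem~\ref{th::inv-D_0} requires that codomain to be $\mathcal{C}^{0,\alpha}_{\nu-2}(\R^n\backslash\B)$ rather than $\mathcal{C}^{0,\alpha}_{\nu}(\R^n\backslash\B)$ as printed in the lemma statement; since $\Delta_0$ drops the weight by two and $\mathcal{C}^{0,\alpha}_{\nu-2}\subsetneq\mathcal{C}^{0,\alpha}_{\nu}$ on $\R^n\backslash\B$, the map as printed cannot be surjective. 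You are implicitly working with the corrected target space (which is also what makes the later application in Section~\ref{sec:soln_construction} go through), so your proof is consistent, but when you assert $\|\mathcal{R}_{\rho,w,\tau}\|\to 0$ you should state explicitly that the operator norm is that of $\mathcal{C}^{2,\alpha}_{\nu,\mathcal{D}}\to\mathcal{C}^{0,\alpha}_{\nu-2}$ and verify, using \eqref{eq:estiL}, \eqref{eq:def-Pi} and the $r^{-k}$ factors appearing in Lemma~\ref{cor:exp-Lap-hat-g} after the $\rho\mapsto 1/\rho$ substitution, that every perturbative coefficient does in fact act boundedly into the $\nu-2$ weight class.
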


\begin {proof}
By  Lemma \ref{cor:exp-Lap-hat-g}, we can find $\rho_0>0$ and
$c_0>0$ such that  the map $$\cL: (0,\rho_0)\times
\B_{c_0}(0)\times \B_{c_0}(0)\times  \mathcal{C}^{2,\alpha}_{\nu,
\mathcal{D}}(\R^n \backslash \B) \to  \mathcal{C}^{0,\alpha}_{\nu
}(\R^n \backslash \B) $$ defined by
$$
\cL(\rho, w,\tau, u)=\Delta_{g_{\rho}} u
$$
is well defined and smooth. We shall show that  the linear
    map
$$u\mapsto  \cL(\rho, w,\tau, u ): \mathcal{C}^{2,\alpha}_{\nu, \mathcal{D}}(\R^n \backslash \B) \to \mathcal{C}^{0,\alpha}_{\nu }(\R^n \backslash \B) $$
is invertible for every $(\rho, w,\tau)\in (0,\rho_0)\times
\B_{c_0}(0)$. To see this, we pick $f\in
\mathcal{C}^{0,\alpha}_{\nu }(\R^n \backslash \B) $ and we define
$$
\cF : (-\rho_0,\rho_0)\times
\B_{c_0}(0)\times\B_{c_0}(0)\times \mathcal{C}^{2,\alpha}_{\nu,
\mathcal{D}}(\R^n \backslash \B) \to \mathcal{C}^{0,\alpha}_{\nu
}(\R^n \backslash \B)
$$
by
$$
\cF(\epsilon,w,\tau,u):=  \cL(|\epsilon|, w,\tau, u )-f.
$$
It is clear from Lemma \ref{cor:exp-Lap-hat-g} that $\cF$ is of
class $C^2$, since $n\geq 3$.  Let $u_0\in
\mathcal{C}^{2,\alpha}_{\nu, \mathcal{D}}(\R^n \backslash \B)$ be
the unique solution to $\Delta_0 u_0=f$.  We have $\cF(0,0, 0, u_0)=0$
and by Theorem \ref{th::inv-D_0}, $\de_u \cF(0,0,0,u_0)$ is
invertible. By the implicit function theorem, there exists a unique
$u_{\epsilon,w,\tau}$ satisfying $ \cF(\epsilon,w,\tau, u_{\epsilon,w,\tau})=0$, with
$u_{0,0,0}=u_0$. We then conclude that, provided $c_0$  and 
$\rho_0$   small, the linear map
$$
  \cL(\rho, w,\tau \cdot ): \mathcal{C}^{2,\alpha}_{\nu, \mathcal{D}}(\R^n \backslash \B) \to \mathcal{C}^{0,\alpha}_{\nu }(\R^n \backslash \B) $$
is invertible for every $ \rho,w,\tau\in (0, \rho_0)\times
(\B_{c_0}(0))^2$. The bound \eqref{eq:invertcomp} also follows from the implicit function theorem. 
\end {proof}

\section{Approximate and actual solutions}
\label{sec:soln_construction} 

In this section we construct an approximate solution using the 
standard Greens function in Euclidean space and compare it to the 
solution of a corresponding Dirichlet problem.\\

For $w\in\B_{c_0}(0)$, we define
\begin{equation}\label{eq:vv}
v(x) = v_{\rho,w} (x) = \left | \left ( 1 + w \left ( \frac{x}{|x|} \right )
\right ) x \right |^{2-n}.
\end{equation}
We have the following expansion.
\begin {lem}\label{eq:lapv}
For $\rho$ sufficiently small the  Laplacian of $v$ is given by 
\begin {eqnarray} \label{lap-expansion2} 
\Delta_{g_{\rho}} v & = &  
\frac{(n-1)(n-2)^2}{2} \sigma \rho^{n-1} r^{1-2n} \\ \nonumber 
&& -\frac{(n-1)(n-4)(n-2)^2}{2} \sigma \rho^{n-1} r^{1-2n} \left \langle  \frac{x}{r^2}, \tau \right \rangle 
+ P_2(1/ \rho,x,w,\tau) .
\end {eqnarray} 
\end {lem}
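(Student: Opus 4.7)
The plan is to compute $\Delta_{g_\rho} v$ term by term using the expansion of $\Delta_{g_\rho}$ from Lemma \ref{cor:exp-Lap-hat-g}, and extract the only contribution that does not fall into the $P_2$ error class. Write $r = |x|$ and note that because $w$ is extended radially, $\langle x, \nabla w \rangle = 0$ identically on $\R^n \setminus \{0\}$; differentiating this relation in $x_i$ gives $x_j w_{ij} = -w_i$. These two identities are the algebraic input that makes the bookkeeping go through.

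First I would compute the Euclidean derivatives of $v = (1+w)^{2-n} r^{2-n}$. Since $\Delta_0 r^{2-n} = 0$ and $w$ is radially constant, one finds that $\partial_j v = (2-n)r^{-n} x_j + r^{2-n}\,\partial_j[(1+w)^{2-n}]$, and $\Delta_0 v$ receives a contribution only from derivatives of $(1+w)^{2-n}$. All such terms have the structure of $L^2(w)$ or $Q^2(w,0)$ multiplied by some power of $r$, and thus contribute to $P_2(1/\rho,x,w,\tau)$.

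Next I would feed these expressions into the expansion
\begin{equation*}
\Delta_{g_\rho} v = (1-2w-\sigma|z|^{1-n})\Delta_0 v - [x_iw_j + x_jw_i + P_1]\,\partial^2_{ij}v - \Bigl[\bigl(3w_j - \tfrac{n-2}{2}\sigma \partial_j|z|^{1-n}\bigr) + x_i w_{ij} + (\Delta_0 w)x_j + P_2\Bigr]\partial_j v,
\end{equation*}
and check case by case: $(x_iw_j + x_jw_i)\,\partial^2_{ij}(r^{2-n})$ vanishes because contracting $\delta_{ij}$ gives $2\langle x,\nabla w\rangle = 0$, and contracting $x_ix_j$ gives $2r^2\langle x,\nabla w\rangle = 0$; similarly $x_iw_{ij}\partial_j(r^{2-n})$ vanishes using $x_j w_{ij} = -w_i$ followed again by $\langle x,\nabla w\rangle = 0$; the remaining $w$ and $P_1$ cross-terms, as well as $(\Delta_0 w)x_j\partial_j v$, all fit into $P_2(1/\rho,x,w,\tau)$ after noting the decay of $\partial_j v$ and $\partial^2_{ij} v$.

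The only surviving leading contribution is therefore $\tfrac{n-2}{2}\sigma\,\partial_j|z|^{1-n}\,\partial_j v$, evaluated on the leading piece $\partial_j v \approx (2-n)r^{-n}x_j$. Using the formula for $\partial_j|z|^{1-n}$ stated in Lemma \ref{cor:exp-Lap-hat-g}, the $x_j$ contraction produces $(n-1)(n-2)\rho^{1-n}r^{1-2n}$ together with a subleading piece $-(n-1)(n-2)(n-3)\rho^{1-n}r^{1-2n}\langle x/r^2,\tau\rangle$, while the $\tau_j$ piece contributes an extra $-(n-1)(n-2)\rho^{1-n}r^{1-2n}\langle x/r^2,\tau\rangle$; these combine to yield $(n-1)(n-2)\rho^{1-n}r^{1-2n} - (n-1)(n-2)(n-4)\rho^{1-n}r^{1-2n}\langle x/r^2,\tau\rangle$. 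Multiplying by $\tfrac{n-2}{2}\sigma$ and applying the convention $\rho \mapsto 1/\rho$ from just before Lemma \ref{lem:Oper-inv} turns $\rho^{1-n}$ into $\rho^{n-1}$, giving exactly the claimed formula. The principal obstacle is purely combinatorial: ensuring every cross-term not shown explicitly above has the scaling required to be absorbed into $P_2$, and this is precisely what the two identities $\langle x,\nabla w\rangle = 0$ and $x_j w_{ij} = -w_i$ enable.
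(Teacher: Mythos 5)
Your overall strategy is the same as the paper's: compute Euclidean derivatives of $v$, feed them into the expansion of $\Delta_{g_\rho}$ from Lemma \ref{cor:exp-Lap-hat-g}, use the radial-constancy identities $\langle x,\nabla w\rangle = 0$ and $x_j w_{ij}=-w_i$ to kill several contractions, and isolate the surviving $\tfrac{n-2}{2}\sigma\,\partial_j|z|^{1-n}\,\partial_j v$ term before swapping $\rho \mapsto 1/\rho$. The arithmetic yielding the $n-4$ coefficient is also correct (although you describe the $\tau_j$ piece as contributing with a minus sign; it actually enters with a plus, which is what makes $-(n-3)+1 = -(n-4)$ come out).

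There is, however, a genuine gap in the bookkeeping. You assert that the linear-in-$w$ part of $\Delta_0 v$ --- namely $-(n-2)r^{2-n}\Delta_0 w$ --- is absorbed into $P_2(1/\rho,x,w,\tau)$, and similarly that $(\Delta_0 w)\,x_j\,\partial_j v$ is absorbed. Neither claim is correct. By \eqref{eq:def-of-P-i}, the linear part of $P_2$ has the specific prefactor $\rho^{1-n}|x|^{1-n}$ (which, after the $\rho\mapsto 1/\rho$ replacement, becomes $\rho^{n-1}|x|^{1-n}$, a small quantity when $\rho$ is small). By contrast, $r^{2-n}\Delta_0 w$ carries no power of $\rho$ at all and a different power of $r$, so it is strictly larger than anything $P_2$ controls and cannot be discarded on scaling grounds. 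What actually happens, and what the paper's proof verifies explicitly, is an exact cancellation: the row $(1-2w-\sigma|z|^{1-n})\Delta_0 v$ produces $-(n-2)r^{2-n}\Delta_0 w + Q^2$, while the row $-\bigl[(\Delta_0 w)x_j + \cdots\bigr]\partial_j v$ produces $+(n-2)r^{2-n}\Delta_0 w + (n-2)r^{-n}x_ix_jw_{ij}$, and these two $\Delta_0 w$ terms cancel while the last one vanishes by the same identity you invoke. Your proof needs to make this cancellation explicit rather than appeal to $P_2$-absorption for these terms; as written the argument would not establish the claimed error estimate.
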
 

\begin {proof}
By definition 
\begin{equation}
v(x)=|x|^{2-n}\biggl(1-(n-2)w +Q^{0}(w,\tau)\biggl) ,
\end{equation}
and so 
\begin{eqnarray} \label{eq:1plaplv}
\frac{\partial v}{\partial x_i}&=&-(n-2)r^{-n}\biggl(1-(n-2)w+Q^{0}(w,\tau)\biggl)x_i\\ \nonumber 
&-&(n-2)r^{2-n}w_i+Q^{1}(w,\tau) \\ \nonumber
\partial_{ij}^{2} v&=&-(n-2)r^{-n}\biggl(1-(n-2)w\biggl)\delta_{ij}\\ \nonumber 
&+&n(n-2)r^{-2-n}\biggl(1-(n-2)w\biggl)x_ix_j+
(n-2)^{2}r^{-n}w_jx_i\\ \nonumber 
&+& (n-2)^{2}r^{-n}w_ix_j-(n-2)r^{2-n}w_{ij}+Q^{2}(w,\tau)\\ \nonumber 
 \Delta_0(v)&=-&(n-2)r^{2-n}\Delta_0w+Q^{2}(w,\tau).
 \end{eqnarray}

With this, we have
 \begin{align}
 &(1-2w-\sigma |z|^{1-n})\Delta_{0}(v)=-(n-2)r^{2-n}\Delta_0w+Q^{2}(w,\tau)\\
 &\biggl [  x_i w_j
 + x_j w_i + P_1(\rho,x,w,\tau) \biggl] \partial^{2}_{ij}(v)=P_2(\rho,x,w,\tau).\nonumber\\
-&\biggl [3w_j+x_iw_{ij}+[\Delta_{0}w]x_j+ P_2(\rho,x,w,\tau]\biggl]\partial_j(v)=(n-2)r^{2-n}\Delta_0w+(n-2)r^{-n}x_ix_jw_{ij}\nonumber\\
& \frac{n-2}{2} \sigma  \partial_j |y|^{1-n} \partial_j(v)=\frac{(n-1)(n-2)^2}{2} \sigma \rho^{1-n} r^{1-2n}-\frac{(n-1)(n-4)(n-2)^2}{2} \sigma \rho^{1-n} r^{1-2n}\langle  \frac{x}{r^2}, \tau \rangle.
\end{align}
However, 
$ 0 = \partial_i( x^j w_j) = \delta_{ij}w_j + x^j w_{ij} \Rightarrow x^i x^j w_{ij} 
= - x^i \delta_{ij}j w_j = - x^i w_i = 0.$ 
The expansion in Lemma \ref{eq:lapv} now follows from Lemma \ref{cor:exp-Lap-hat-g} after  replacing $\rho$ by $1/\rho.$
\end {proof}

Using Lemma \ref{lem:Oper-inv},
 we construct a unique solution $\Psi_{\rho,w} (x)$  to the equation
\begin{equation}\label{eq:extremal-capacitor2r}
\begin{cases}
\Delta_{g_{\rho}}  \Psi_{\rho,\tau, w} &= -\Delta_{g_{\rho}} v \quad
\textrm { in } \quad\R^n
\backslash \B \vspace{4mm}\\
\Psi_{\rho,w}&=1-v= (n-2)w +Q^0(w)  \quad \textrm{ on }\quad
\partial \B.
\end{cases}
\end{equation}
First choose $\chi \in \mathcal{C}^\infty_c(\R_+)$ such that 
$\chi(t) = 1$ for $1/2 \leq t \leq 1$, and define 
$$f (\rho,x,w) = -\Delta_{g_{\rho}}\biggl(
((n-2)w +Q^0(w))\chi(|x|)\biggr)-\Delta_{g_{\rho}} v 
\in \mathcal{C}^{0,\alpha}_\nu (\R^n \backslash \B)$$
for any $\nu \in (2-n, 0)$.  
Next we use Lemma \ref{lem:Oper-inv} to let $\widetilde{\Psi}_{\rho,\tau,w}$ 
be the unique solution of 
\begin{equation} \label{eq:extremal-capacitor21}
\Delta_{g_\rho} \widetilde{\Psi}_{\rho,\tau,w} (x)= f(\rho,x,w) \textrm{ for } x \in 
\R^n \backslash \B, \qquad \widetilde{\Psi}_{\rho,\tau,w} = 0 \textrm{ on } 
\partial \B. 
\end{equation} 
Observe that  $f$ depends smoothly on  $(\rho,w,\tau)$ and that the
mapping $A\mapsto (A)^{-1}$ is smooth in the open set of linear
invertible operator between two Banach spaces. We   deduce that  the
mapping $$ (\rho,\tau, w)\mapsto  \widetilde{\Psi}_{\rho,\tau, w}$$ is smooth. We then
have
\begin{equation}\label{eq:psi}
 \Psi_{\rho,\tau, w}=  \widetilde{\Psi}_{\rho,\tau , w}+  ((n-2)w +Q^0(w))\chi(|x|),
 \end{equation}
which solves uniquely \eqref{eq:extremal-capacitor2r}.\\

Finally we define 
\begin{equation}\label{eq:u-r-w-t}
\widehat{u}_{\rho,\tau, w}(x)=v (x)+\Psi_{\rho,\tau, w} (x),
 \end{equation}
 which satisfies 
 \begin{equation}\label{eq:extremal-capacitor2}
\begin{cases}
\Delta_{g_{\rho}}  \widehat{u}_{\rho,\tau, w} =0 \quad \textrm { in } \quad\R^n
\backslash \B \vspace{4mm}\\
\widehat{u}_{\rho,\tau,w}=  1  \quad \textrm{ on }\quad \partial \B.
\end{cases}
\end{equation}
Since  $f(\rho,\cdot,w) \in  \mathcal{C}^{0,\alpha}_{\nu}(\R^n
\backslash \B)$, \eqref{eq:extremal-capacitor21}  and   \eqref{eq:invertcomp} 
imply that $\widetilde{\Psi}_{\rho,w}(x)\rightarrow 0 \quad \textrm{as}\quad |x|\rightarrow \infty$. 
We conclude with  \eqref{eq:vv},  \eqref{eq:psi} and  \eqref{eq:u-r-w-t} that   
$$\widehat{u}_{\rho,\tau, w}(x)\rightarrow 0 \quad \textrm{as}\quad |x|\rightarrow \infty.$$

\section{Expansion of the normal derivative and linear analysis} 
\label{sec:linear} 

The aim of this section is to derive an expansion of the normal derivative of the 
solution  $ \widehat{u}_{\rho,\tau, w}$. We start by the computation of the interior unit normal vector to $\mathbf{B}$ .
\begin{lem}\label{lem Nordd}
Let $\Theta: \mathbb{R}^{n-1}\longrightarrow \Ss$ parameterize $\Ss$ by the inverse of 
stereographic projection and let 
$$\Theta_k:=\partial_{k}\Theta\quad k=1,...,n-1.$$
 The interior unit normal vector field to
$\mathbf{B}$  with respect to the metric $g_{\rho}$ is given by
\begin{equation}\label{eq:vectuni-in-lem}
\widehat{\nu}_{g_{\rho}}=\frac{\widehat{\eta}}{\sqrt{ \langle \widehat{\eta},
\widehat{\eta}\rangle_{g_{\rho}}
}}=\biggl[1-w-\frac{\sigma}{2}\rho^{1-n}\biggl(1-(n-1)\langle \Theta, \tau \rangle\biggl) +P_1(\rho,
\Theta,w,\tau)\biggl](-\Theta+\Upsilon),
\end{equation}
where 
$$\Upsilon=\sum^{n-1}_{m=1}a_k \Theta_k\quad \textrm{and}\quad a_k=\langle\nabla_{\Ss
}w,\Theta_k\rangle+P_0(\rho, \Theta,w,\tau).$$
\end{lem}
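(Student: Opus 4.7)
Since $\Ss$ coincides with its own Euclidean outward normal field (the Euclidean interior unit normal at $\Theta$ is $-\Theta$), the plan is to determine the metric-$g_\rho$ normal as a tilt of $-\Theta$ by a tangential vector, and then normalize. Concretely, I would set
\[
\widehat{\eta}=-\Theta+\Upsilon,\qquad \Upsilon=\sum_{m=1}^{n-1}a_m\Theta_m,
\]
and pin down the coefficients $a_m$ by imposing $g_\rho$-orthogonality of $\widehat{\eta}$ to the tangent frame $\{\Theta_k\}$. Then $\widehat{\nu}_{g_\rho}=\widehat{\eta}/\sqrt{\langle \widehat{\eta},\widehat{\eta}\rangle_{g_\rho}}$ by definition.

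For Step 1 (the tangential coefficients) I would evaluate the two basic inner products at $x=\Theta\in\Ss$ using the expansion \eqref{metric-expansion2}. The key observation is that because we extend $w$ radially constant we have $x^iw_i\equiv 0$, and because $\Theta_k$ is tangent to $\Ss$ we have $\Theta^j(\Theta_k)_j=0$. A short calculation then gives
\[
\langle\Theta,\Theta_k\rangle_{g_\rho}=\langle \nabla_{\Ss} w,\Theta_k\rangle+P_1,\qquad
\langle \Theta_m,\Theta_k\rangle_{g_\rho}=(1+2w+\sigma|z|^{1-n})\langle \Theta_m,\Theta_k\rangle+P_1.
\]
Setting $\langle-\Theta+\Upsilon,\Theta_k\rangle_{g_\rho}=0$ gives a linear system for $(a_m)$ which, being a small perturbation of the identity, is solvable by Neumann series and produces $a_k=\langle\nabla_{\Ss}w,\Theta_k\rangle+P_0(\rho,\Theta,w,\tau)$ as asserted.

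For Step 2 (the norm) I would expand
\[
\langle\widehat{\eta},\widehat{\eta}\rangle_{g_\rho}=\langle\Theta,\Theta\rangle_{g_\rho}-2\langle \Theta,\Upsilon\rangle_{g_\rho}+\langle\Upsilon,\Upsilon\rangle_{g_\rho}.
\]
Using \eqref{metric-expansion2} together with $x^iw_i=0$ at $x=\Theta$, the leading piece is
\[
\langle\Theta,\Theta\rangle_{g_\rho}=1+2w+\sigma|z|^{1-n}+P_1.
\]
Since $\Upsilon$ is tangential and of size $\|w\|_{C^1}$, the cross term $\langle\Theta,\Upsilon\rangle_{g_\rho}$ and the quadratic term $\langle\Upsilon,\Upsilon\rangle_{g_\rho}$ are both absorbed into $P_1$. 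Restricting the formula \eqref{eqz} for $|z|^{1-n}$ to $r=|x|=1$ yields $|z|^{1-n}=\rho^{1-n}\bigl(1-(n-1)\langle\Theta,\tau\rangle\bigr)$, so
\[
\langle\widehat{\eta},\widehat{\eta}\rangle_{g_\rho}=1+2w+\sigma\rho^{1-n}\bigl(1-(n-1)\langle\Theta,\tau\rangle\bigr)+P_1(\rho,\Theta,w,\tau).
\]

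For Step 3 I would apply the scalar expansions $\sqrt{1+s}=1+s/2+\mathcal{O}(s^2)$ and $(1+s)^{-1/2}=1-s/2+\mathcal{O}(s^2)$ to the right-hand side above and use that any product of $L^i$- or $Q^i$-type terms is a $P_1$-term (as noted after \eqref{eq:def-Pi}) to conclude
\[
\frac{1}{\sqrt{\langle\widehat{\eta},\widehat{\eta}\rangle_{g_\rho}}}=1-w-\frac{\sigma}{2}\rho^{1-n}\bigl(1-(n-1)\langle\Theta,\tau\rangle\bigr)+P_1(\rho,\Theta,w,\tau).
\]
Multiplying this scalar factor against $\widehat{\eta}=-\Theta+\Upsilon$ produces \eqref{eq:vectuni-in-lem}. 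The main bookkeeping obstacle is ensuring that all error terms produced in steps 1--3 really have the structure \eqref{eq:def-of-P-i} of $P_1$: this reduces, by the closure properties of $P_i$ under products and Neumann-series inversion, to verifying the structure of each individual cross term, which is routine once one systematically uses $x^iw_i=0$ and $\Theta^j(\Theta_k)_j=0$ at $|x|=1$.
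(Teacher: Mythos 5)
Your proposal is correct and follows essentially the same route as the paper: the same ansatz $\widehat\eta=-\Theta+\Upsilon$ with $\Upsilon$ tangential, the same normality equations read off from \eqref{metric-expansion2}, the same key simplifications ($x^iw_i=0$ from radial extension, $\langle\Theta,\Theta_k\rangle=0$ from tangency), the same decomposition of $\langle\widehat\eta,\widehat\eta\rangle_{g_\rho}$, and the same Taylor expansion of $(1+s)^{-1/2}$ together with \eqref{eqz} restricted to $|x|=1$. The only cosmetic difference is that you invoke a Neumann series to invert the near-identity linear system for $(a_k)$, whereas the paper simply writes $a_k=\sum_\ell b_\ell\widetilde g^{k\ell}$; these are the same observation.
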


\begin{proof}
For each $\Theta \in \Ss$ the vector fields  
\begin{align}
\Theta_\ell:=&\de_{\ell}\Theta(s),\qquad \ell=1,\dots, n-1 
\end{align}
span the tangent space $T_\Theta \Ss$. 
Since  $\langle \Theta, \Theta \rangle = 1$, we have   that  $\langle \Theta,
\Theta_\ell \rangle =0 $ and $\la \n_{\Ss} w, \Theta\ra_{g_\Ss} =0$ on
$\Ss$. Without loss of generality, we may assume that $ \la
\Theta_i, \Theta_j\ra =\d_{ij}$.

 We look for a  normal vector $\widehat{\eta}$ of $\Ss$ with
respect to the metric $\widehat{g}$  in the form
\begin{equation}\label{eq: formnor}
\widehat{\eta}=-\Theta+\Upsilon, \qquad 
\Upsilon = \sum_{m=1}^{n-1} a_k \Theta_k .
 \end{equation} 
The condition that $\widehat \eta$ is normal 
is thus equivalent to 
\begin{equation}\label{eq:fiu}
\langle \widehat{\eta},\Theta_\ell \rangle_{g_{\rho}}=0,\quad \ell=1,...,n-1
\Leftrightarrow 
\langle \Upsilon, \Theta_\ell \rangle_{g_{\rho}}=\langle
\Theta,\Theta_\ell\rangle_{g_{\rho}},\quad \ell=1,...,n-1.
\end{equation}

By Lemma \ref{eq:metric} 
\begin{align}\label{eq: prod}
\langle \Theta ,\Theta_\ell\rangle_{g_{\rho}}&=
(1+2w+\sigma |z|^{1-n}) \langle \Theta ,\Theta_\ell\rangle\nonumber\\
&+ \left [ \Theta^i w_j\Theta^i \Theta_\ell^j
 + \Theta^jw_i\Theta^i \Theta_\ell^j +P_1(\rho, \Theta,w,\tau) \right ] \nonumber\\
&=\biggl[\langle\nabla_{\Ss }w,\Theta_\ell \rangle +P_1(\rho,
\Theta,w,\tau) \biggl]
\end{align}
and
\begin{align*}
\langle\Upsilon,\Theta_\ell\rangle _{g_{\rho}}&=\sum^{n-1}_{k=1}a_k\langle\Theta_k,\Theta_\ell\rangle_{g_{\rho}}=
\sum^{n-1}_{k=1}a_k\biggl((1+2w+\sigma |z|^{1-n})\delta_{k\ell}+P_1(\rho,\Theta,w,\tau)\biggl).
\end{align*}
Substituting these last two expressions into \eqref{eq:fiu}, we obtain 
\begin{equation}\label{eq:syste}
\sum^{n-1}_{k=1}a_k\widetilde{g}_{k\ell}=b_\ell, \quad \ell=1,...,n-1,
\end{equation}
 where
$$\widetilde{g}_{k\ell}:=(1+2w+\sigma|y|^{1-n})\delta_{k\ell}+P_1(\rho,\Theta,w,\tau)\quad
\textrm{and} \quad b_\ell:=\langle\nabla_{\Ss }w,\Theta_\ell\rangle +P_1(\rho,
\Theta,w,\tau), $$ 
so that \eqref{eq:syste} then implies 
\begin{equation}\label{eq: coefff}
a_k=\sum^{n-1}_{\ell=1}b_\ell\widetilde{g}^{k\ell}=\langle\nabla_{\Ss
}w,\Theta_k\rangle+P_1(\rho, \Theta,w,\tau).
\end{equation}

Next we compute $$\langle \widehat{\eta},\widehat{\eta}\rangle_{g_{\rho}}=\langle
\Theta, \Theta\rangle_{g_{\rho}}-2\langle
\Theta,\Upsilon\rangle_{g_{\rho}}+\langle
\Upsilon,\Upsilon\rangle_{g_{\rho}}.$$
We have $$\langle \Theta,
\Theta\rangle_{g_{\rho}}=\biggl(1+2w+\sigma|z|^{1-n}+P_0(\rho,
\Theta,w,\tau)\biggl)$$ Also, from \eqref{eq: coefff}, $\langle
\Upsilon,\Upsilon\rangle _{g_{\rho}}=P_1(\rho, \Theta,w,\tau)$. Using
once more \eqref{eq: prod} and \eqref{eq: coefff}, we obtain 
$$\langle
\Theta,\Upsilon\rangle_{g_{\rho}}=\sum^{n-1}_{k=1}a_k\langle
\Theta,\Theta_\ell \rangle_{\widehat{g}}=P_1(\rho, \Theta,w,\tau)$$
 and hence $$\langle
 \widehat{\eta},\widehat{\eta}\rangle_{g_{\rho}}=\biggl(1+2w+\sigma |z|^{1-n}+P_1(\rho,
\Theta,w,\tau)\biggl).$$ 

The normal interior unit vector field  to
$\mathbf{B}$ for the metric $g_{\rho}$ is the given by
\begin{equation}\label{eq:vectuni}
\widehat{\nu}_{g_{\rho}}=\frac{\widehat{\eta}}{\sqrt{  \langle \widehat{\eta},
\widehat{\eta}\rangle_{g_{\rho}}
}}=\biggl(1-w-\frac{\sigma}{2} |z|^{1-n}+P_1(\rho,
\Theta,w,\tau)\biggl)(-\Theta+\Upsilon) ,
\end{equation}
and so \eqref{eq:vectuni-in-lem} follows from \eqref{eqz}.\\
\end{proof}

\subsection{Expansion of the  normal derivative} 
\label{sec:expannormalderiv} 
The following proposition yields the expansion of the normal derivative 
of $\widehat{u}_{\rho,\tau, w}$ with respect to the metric $g_{\rho}.$
\begin{prop}\label{lem Norddd}
For $\rho$ sufficiently small the normal derivative of
$\widehat{u}=\widehat{u}_{\rho,\tau, w}= v+ \Psi_{\rho,\tau w}$ with respect to the
metric $g_{\rho}$ on $\de \B$ is given by
\begin{align}\label{eq: der1v2}
\frac{\partial \widehat{u}}{\partial \widehat{\nu}_{g_{\rho}}}=(n-2)(1-\frac{\sigma}{2}\rho^{n-1})+ H(\rho, \tau, w),
\end{align}
where 
\begin{equation}\label{equ:H}
H(\rho, \tau, w)(x):=(n-1)(n-2)\biggl(\frac{\sigma}{2}\rho^{n-1}\langle x, \tau \rangle-w+P_1(1/\rho,
x,w,\tau)\biggl)+\widehat{\nu}^i_{g_{\rho}}\frac{\partial
\Psi_{\rho,\tau, w}}{\partial x_i}
\end{equation}
and  $\Psi_{\rho,\tau, w}$ is solution of  \eqref{eq:extremal-capacitor2r}.
\end{prop}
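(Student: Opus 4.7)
The plan is to decompose $\widehat{u}_{\rho,\tau,w} = v + \Psi_{\rho,\tau,w}$ and compute
\begin{equation*}
\frac{\partial \widehat{u}_{\rho,\tau,w}}{\partial \widehat{\nu}_{g_\rho}} \;=\; \widehat{\nu}^i_{g_\rho}\,\partial_i v \;+\; \widehat{\nu}^i_{g_\rho}\,\partial_i \Psi_{\rho,\tau,w}.
\end{equation*}
The second summand appears verbatim in the definition \eqref{equ:H} of $H$, so the real task is to expand $\widehat{\nu}^i_{g_\rho}\partial_i v$ on $\partial \mathbf{B}$ to the required order.

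First I would restrict the derivatives $\partial_i v$ computed in \eqref{eq:1plaplv} to $\partial \mathbf{B}$. There $|x|=1$ and $x=\Theta$, so
\begin{equation*}
\partial_i v\big|_{\partial \mathbf{B}} \;=\; -(n-2)\bigl(1-(n-2)w\bigr)\Theta_i \,-\, (n-2)w_i \,+\, Q^1(w,\tau).
\end{equation*}
Next I would insert the expansion of the unit normal from Lemma \ref{lem Nordd}, after the substitution $\rho\mapsto 1/\rho$ dictated by the convention $g_\rho := g_{1/\rho}$ adopted at the end of Section \ref{sec:dirichlet}, which turns the factor $\rho^{1-n}$ into $\rho^{n-1}$ and the remainder $P_1(\rho,\cdot)$ into $P_1(1/\rho,\cdot)$:
\begin{equation*}
\widehat{\nu}_{g_\rho} \;=\; \Bigl[\,1 - w - \tfrac{\sigma}{2}\rho^{n-1}\bigl(1 - (n-1)\langle\Theta,\tau\rangle\bigr) + P_1(1/\rho,\Theta,w,\tau)\,\Bigr]\bigl(-\Theta + \Upsilon\bigr),
\end{equation*}
where $\Upsilon = \sum_k a_k\Theta_k$ is tangent to $\mathbf{S}$ with respect to the round metric.

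The key step is then to take the dot product and exploit two orthogonality relations. Because $w$ is extended to be radially constant, $\sum_i \Theta^i w_i = 0$; because $\Upsilon$ is tangent to $\mathbf{S}$, $\sum_i \Upsilon^i \Theta_i = 0$. Consequently, when expanding $(-\Theta^i + \Upsilon^i)\bigl(-(n-2)(1-(n-2)w)\Theta_i - (n-2)w_i + Q^1(w,\tau)\bigr)$, the only surviving linear-order contribution is $(n-2)(1-(n-2)w)$; the remaining cross-term $-(n-2)\Upsilon^i w_i$ is a product of two $L^0$-type linear quantities in $w$, hence of $Q^1$ type and absorbable into $P_1(1/\rho,x,w,\tau)$. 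Multiplying by the scalar prefactor and sorting by order, the constant part reads $(n-2)\bigl(1-\tfrac{\sigma}{2}\rho^{n-1}\bigr)$, the combined linear-in-$w$ contribution from $(1-(n-2)w)(1-w)$ reads $-(n-1)(n-2)w$, and the linear-in-$\tau$ contribution from $\tfrac{\sigma(n-1)}{2}\rho^{n-1}\langle\Theta,\tau\rangle$ multiplied by the leading $1$ reads $(n-1)(n-2)\tfrac{\sigma}{2}\rho^{n-1}\langle\Theta,\tau\rangle$. Every other product of two small quantities fits the bound \eqref{eq:def-Pi} and is absorbed into a final $P_1(1/\rho,x,w,\tau)$. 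Restoring the $\Psi$ contribution gives precisely \eqref{eq: der1v2} with $H$ as in \eqref{equ:H}.

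The main obstacle is purely combinatorial bookkeeping: separating the genuine linear contributions in $w$ and $\tau$ from the $P_1$ and $Q^1$ remainders. One must carry the rescaling $\rho \mapsto 1/\rho$ consistently through every formula borrowed from Section \ref{sec:preliminary}, and both orthogonality identities $\Theta^i w_i\equiv 0$ (from the radial extension of $w$) and $\Upsilon^i\Theta_i=0$ (from tangentiality of $\Upsilon$) are essential — without them the offending linear term $-(n-2)w_i$ would not be absorbable into $P_1$. Once these cancellations are identified, the proposition follows from a direct expansion of the product.
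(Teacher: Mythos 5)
Your proof is correct and follows essentially the same route as the paper: restrict $\partial_i v$ to $\partial\mathbf{B}$, pair against the unit normal from Lemma \ref{lem Nordd}, use $\Theta^i w_i = 0$ and $\Upsilon^i\Theta_i = 0$ to isolate the leading contribution, and substitute $\rho\mapsto 1/\rho$. The only cosmetic difference is that you compute the Euclidean directional derivative $\widehat{\nu}^i_{g_\rho}\partial_i v$ directly, whereas the paper writes $g_\rho(\nabla_{g_\rho}v,\widehat{\nu}_{g_\rho})$ and lets the metric factors cancel; these agree identically.
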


\begin{proof}
Recall our solution $ \widehat{u}_{\rho,\tau, w}(x)=v (x)+\Psi_{\rho,\tau, w}(x)$, where 
$v$ is given by \eqref{eq:vv} and $\Psi_{\rho,\tau,w}$ satisfies \eqref{eq:extremal-capacitor2r}. 

We  first compute $g_{\rho}(\nabla_{g_{\rho}}v,\widehat{\nu}_{g_{\rho}})$.
By Lemma \ref{eq:metric} and \eqref{eq:1plaplv} 
\begin{align*}\label{derngardo}
\nabla_{g_{\rho}}^jv&=\sum^{n}_{i=1}g_{\rho}^{ij}\frac{\partial
v}{\partial x_i}=\biggl[ (1-2w-\sigma |z|^{1-n})
\delta_{ij} - ( x_i w_j + x_j
w_i)+ P_0(\rho,x,w,\tau)\biggl]\times\\
&\biggl[-(n-2)\biggl(1-(n-2)w+Q^{0}(w)\biggl)x_i-(n-2)w_i+Q^{1}(w)\biggl]\\
&= -(n-2)\biggl((1-\sigma |z|^{1-n})x_j-nw
x_j+ P_1(\rho,x,w,\tau)\biggl)\quad on\quad  \partial\mathbf{B}, 
 \end{align*}
 which yields 
 $$g_{\rho}(-\Theta, \nabla_{g_{\rho}}v )=- \biggl[ (1+2w+\sigma |z|^{1-n}) \langle\Theta,\nabla_{g_{\rho}}v\rangle + \Theta^i
w_j\Theta^i\nabla^{j}_{g_{\rho}}v + \Theta^j
w_i\Theta^i\nabla^{j}_{g_{\rho}}v +P_0(\rho,\Theta,w,\tau)\biggl].$$
On the other hand, 
\begin{align*}
\langle\Theta,\nabla_{g_{\rho}}v\rangle&=-(n-2)\biggl(1-\sigma |z|^{1-n}-nw+P_1(\rho,\Theta,w,\tau)\biggl), 
\end{align*}
so we may rewrite our expression for $g_\rho (-\Theta, \nabla_{g_\rho} v )$ as 
\begin{equation}\label{eq:dernor1}
g_{\rho}(-\Theta, \nabla_{g_{\rho}}v
)=(n-2)\biggl(1+(2-n)w+P_1(\rho,\Theta,w,\tau)\biggl).
\end{equation}
Additionally, 
\begin{equation}\label{eq:dernor2}
g_{\rho}(\Upsilon, \nabla_{g_{\rho}}v
)=\sum^n_{k=1}a_kg_{\rho}(\Theta_k, \nabla_{g_{\rho}}v
)=\sum^n_{k=1}a_kg_{\rho}(\Theta_k, \Theta)+ P_1(\rho,\Theta,w,\tau)= P_1(\rho,\Theta,w,\tau),
\end{equation}
where we have used \eqref{eq: prod} in the last equality.
Finally, making use of  \eqref{eq:vectuni}, \eqref{eq:dernor2} and  \eqref{eq:dernor1}, we obtain 
\begin{align}\label{eq: derv1}
g_{\rho}(\nabla_{g_{\rho}}v,\widehat{\nu}_{g_{\rho}})&=
\biggl(1-w-\frac{\sigma}{2}|z|^{1-n}+P_1(\rho,
\Theta,w,\tau) \biggl)g_{\rho}(-\Theta+\Upsilon, \nabla_{\widehat{g}}v )\nonumber\\
 &=(n-2)\biggl(1-w-\frac{\sigma}{2}|z|^{1-n}+P_1(\rho,
\Theta,w,\tau) \biggl)\biggl(1+(2-n)w+P_1(\rho,\Theta,w)\biggl)\nonumber\\
&=(n-2)\biggl(1-\frac{\sigma}{2}\rho^{1-n}+\frac{\sigma(n-1)}{2}\rho^{1-n}\langle x, \tau \rangle-(n-1)w+P_1(\rho,
\Theta,w,\tau)\biggl)
\end{align}
and 
\begin{align}\label{eq: derv3}
g_{\rho}(\nabla_{g_{\rho}} \Psi_{\rho,\tau, w},\widehat{\nu}_{g_{\rho}})&=g_{\rho}^{ij}(g_{\rho})_{jk}\widehat{\nu}^k_{g_{\rho}}\frac{\partial
\Psi_{\rho,\tau, w}}{\partial x_i}=\delta_{ik}\widehat{\nu}^k_{g_{\rho}}\frac{\partial
\Psi_{\rho,\tau, w}}{\partial x_i}=\widehat{\nu}^i_{g_{\rho}}\frac{\partial
\Psi_{\rho,\tau, w}}{\partial x_i}.
\end{align}
The expansion \eqref{eq: der1v2}  then follows from  \eqref{eq: derv1} and \eqref{eq: derv3} replacing $\rho$ by $1/\rho$. \\
\end{proof}

\subsection{Linear analysis of the normal derivative} 
\label{sec:linearana} 

The leading term in \eqref{eq:   der1v2} is $$(n-2) (1-\sigma \rho^{n-1} /2),$$ and to complete our 
proof we must throughly understand the reminder term $H(\rho,\tau,w)$. To assist in this analysis 
we linearize the function $G(\rho,\tau,w) = \rho^{1-n} H(\rho,\tau \rho^{n-1} w)$. 
\begin{prop}\label{eq:diffG}
Defining   
\begin{equation}\label{eq:defG}
G(\rho, \tau, w):=\rho^{1-n}H(\rho, \tau, \rho^{n-1} w),
\end{equation}
 we have
\begin{equation}\label{eq:dertauG}
[D_\tau \big|_{(\rho,\tau, w)=(0,0,0)} G(\rho,\tau, w)] \cdot \tau =\dfrac{(n-1)(n-2)}{2} \biggl(1+\dfrac{(n-2)(n-4)}{2}\biggl)\sigma \langle x, \tau \rangle
\end{equation}
and 
\begin{equation}\label{eq:derw}
\mathbb{L}(w):=[D_w \big|_{(\rho,\tau, w)=(0,0,0)} G(\rho,\tau, w)] \cdot w=\partial_{\nu} \psi_w\cdot x -(n-1)  w,
\end{equation}
where  $\nu=-x$ and  $\psi_{w}$ is the
unique solution to
\begin{equation}\label{eq:extremal-capcff1}
\begin{cases}
\Delta_0 \psi_{w} &=0\quad \textrm { in } \quad\R^n
\backslash \B \vspace{4mm}\\
\psi_{w}&= w \quad \textrm{ on }\quad
\partial \B.\\
\end{cases}
\end{equation}
\end{prop}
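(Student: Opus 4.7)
The plan is to exploit the fact that the Dirichlet problem \eqref{eq:extremal-capacitor2r} for $\Psi_{\rho,\tau,\rho^{n-1}w}$ produces a function of size $\mathcal{O}(\rho^{n-1})$, so the rescaled quantity $\Pi := \rho^{1-n}\Psi_{\rho,\tau,\rho^{n-1}w}$ admits a smooth limit at $\rho = 0$. Concretely, I would first rewrite
\[
G(\rho,\tau,w) = (n-1)(n-2)\Bigl[\tfrac{\sigma}{2}\langle x,\tau\rangle - w\Bigr] + (n-1)(n-2)\rho^{1-n}P_1(1/\rho,x,\rho^{n-1}w,\tau) + \widehat\nu^i_{g_\rho}\,\partial_i\Pi,
\]
and verify, using the quantitative bounds in \eqref{eq:def-Pi} together with the nonlinear estimates on $Q^i$, that the middle $P_1$-block and its first derivatives in $(\tau,w)$ vanish at $(\rho,\tau,w) = (0,0,0)$. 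Thus only the explicit bracket and the $\Pi$-term contribute to the linearizations.

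For $[D_\tau G]|_{(0,0,0)}\cdot\tau$, the explicit bracket yields $\tfrac{(n-1)(n-2)\sigma}{2}\langle x,\tau\rangle$. For the $\Pi$-piece I would apply Lemma \ref{lem Nordd} to identify $\widehat\nu^i|_{(0,0,0)} = -\Theta^i$ and to check that $\partial_\tau\widehat\nu^i|_{(0,0,0)} = \mathcal{O}(\rho^{n-1})$; combined with the a priori control on $\partial_i\Pi|_{(0,0,0)}$, the relevant contribution reduces to $-\Theta^i\partial_i[\partial_\tau\Pi|_{(0,0,0)}\cdot\tau]$ on $\partial\B$. To compute $\partial_\tau\Pi|_{(0,0,0)}\cdot\tau$, differentiate the PDE $\Delta_{g_\rho}\Pi = -\rho^{1-n}\Delta_{g_\rho}v_{\rho,\rho^{n-1}w}$ in $\tau$ at the origin; at $\rho = 0$ the operator reduces to $\Delta_0$, and by Lemma \ref{eq:lapv} the $\tau$-linear part of the source is an explicit multiple of $r^{-1-2n}\langle x,\tau\rangle$. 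This yields a linear Dirichlet problem on $\R^n\setminus\B$ with zero boundary data and decay at infinity, explicitly solvable via a degree-one spherical-harmonic ansatz $\tilde\phi(r)\langle x/|x|,\tau\rangle$ that reduces the PDE to an Euler ODE solved by powers of $r$. Evaluating $-\Theta^i\partial_i$ of this solution on $\partial\B$ and adding the explicit bracket gives \eqref{eq:dertauG}.

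For $[D_w G]|_{(0,0,0)}\cdot w$, the explicit bracket yields $-(n-1)(n-2)w$. The crucial observation, already visible in the proof of Lemma \ref{eq:lapv}, is that at $\rho = 0$ the $w$-linear part of $\Delta_{g_\rho}v$ cancels identically: the contribution $-(n-2)r^{2-n}\Delta_0 w$ arising from $(1-2w-\sigma|z|^{1-n})\Delta_0 v$ is matched by the contribution $+(n-2)r^{2-n}\Delta_0 w$ arising from the coefficient $[\Delta_0 w]\,x_j$ of $\partial_j v$ in Lemma \ref{cor:exp-Lap-hat-g}, while the residual candidate term involves $x_i x_j w_{ij}$, which vanishes for the $0$-homogeneous extension of $w$. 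Consequently, linearizing the PDE for $\Pi$ in $w$ at the origin yields
\[
\Delta_0(\partial_w\Pi|_{(0,0,0)}\cdot w) = 0 \text{ in } \R^n\setminus\B, \qquad \partial_w\Pi|_{(0,0,0)}\cdot w = (n-2)w \text{ on } \partial\B,
\]
together with decay at infinity. Uniqueness forces $\partial_w\Pi|_{(0,0,0)}\cdot w = (n-2)\psi_w$, with $\psi_w$ as in \eqref{eq:extremal-capcff1}. Applying $-\Theta^i\partial_i$ on $\partial\B$ produces $(n-2)\partial_\nu\psi_w$, and combining with the explicit $-(n-1)(n-2)w$ gives \eqref{eq:derw}.

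The main technical difficulty is the bookkeeping showing that all remaining cross-terms—those from $P_1$ and $P_2$, from $(\partial_\tau\widehat\nu^i)\partial_i\Pi|_{(0,0,0)}$, and from $[\partial_\tau\Delta_{g_\rho}]\Pi|_{(0,0,0)}$—are higher order in $\rho$ and drop out. The $w$-linear cancellation in $\Delta_{g_\rho}v$ at $\rho = 0$ is the key conceptual step; once verified, the reduction to explicit, spherical-harmonic-separable Dirichlet problems is routine.
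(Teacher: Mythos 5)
Your proposal is correct and mirrors the paper's own argument: both rescale to $\overline{\Psi}_{\rho,\tau,w}=\rho^{1-n}\Psi_{\rho,\tau,\rho^{n-1}w}$, differentiate its governing boundary value problem \eqref{eq:extremal-capacitor222} at the origin, and reduce to an explicit degree-one Dirichlet problem for the $\tau$-linearization and to the harmonic Dirichlet problem \eqref{eq:extremal-capcff1} for the $w$-linearization, using \eqref{eq:def-tow} to suppress the contributions from $D_{\tau,w}\widehat{\nu}$. The only variation is cosmetic---the paper computes $\partial_\nu F_\tau$ by testing against the Kelvin transforms $\mathbb{K}(Y^k_i)$ to isolate the degree-one spherical-harmonic component, whereas you propose to solve the Euler ODE for the radial factor directly---and I would flag that your correctly tracked output $(n-2)\bigl[\partial_\nu\psi_w-(n-1)w\bigr]$ carries an overall factor of $(n-2)$ that the stated \eqref{eq:derw} omits, a harmless slip shared by the paper's own derivation (it rescales $\mathbb{L}$ without altering its kernel $V_1$ or its invertibility on $V_1^\perp$).
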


\begin{proof}
From  \eqref{eq:defG} and \eqref{equ:H}, we have 
\begin{align}\label{equ:solve2}
G(\rho, \tau, w)(x)&=(n-1)(n-2)\biggl(\frac{\sigma}{2} \langle x, \tau \rangle-w+\rho^{1-n}P_1(1/\rho,
\Theta,\rho^{n-1}w,\tau)\biggl)\nonumber\\
&+\widehat{\nu}^i_{g_{\rho}}(\rho^{n-1}w)\frac{\partial
\overline{\Psi}_{\rho,\tau, w}}{\partial x_i},
\end{align}
where   $$ \overline{\Psi}_{\rho,\tau, w}:= \rho^{1-n} \Psi_{\rho,\tau, \rho^{n-1} w},$$ and 
$\widehat{\nu}^i_{g_{\rho}}(\rho^{n-1}w)$ is the ith-component of the unit vector $\widehat{\nu}_{g_{\rho}}$ in  \eqref{eq: der1v2} with $\rho^{n-1}w$ in place of  $w$. 
We see from  \eqref{eq:extremal-capacitor2r} and Lemma \ref{eq:lapv} that  $\overline{\Psi}_{\rho,\tau, w}$ is solution the unique solution of 

\begin{equation}\label{eq:extremal-capacitor222}
\begin{cases}
\Delta_{g_{\rho}} \overline{\Psi}_{\rho,\tau, w} &= -\dfrac{(n-1)(n-2)^2}{2} \sigma r^{1-2n}+\dfrac{(n-1)(n-4)(n-2)^2}{2} \sigma r^{1-2n}\langle  \frac{x}{r^2}, \tau \rangle\\
& +  \rho^{1-n}P_2(1/ \rho,x, \rho^{n-1}w,\tau)  \quad
\textrm { in } \quad\R^n
\backslash \B \vspace{4mm}\\
\overline{\Psi}_{\rho,\tau, w}&=(n-2)w +\rho^{1-n}Q^0(\rho^{n-1}w,\tau)  \quad \textrm{ on }\quad
\partial \B.
\end{cases}
\end{equation} 
Differentiating  \eqref{eq:extremal-capacitor222} with respect to $\tau$ at $(\rho,\tau, w)=(0,0,0)$, we see 
$$F_{\t}:=[D_\tau \big|_{(\rho,\tau, w)=(0,0,0)} \overline{\Psi}_{\rho,\tau, w}] \cdot \tau$$  satisfies
\begin{align}\label{eq:def-Psi-tau}
\begin{cases}
\Delta  F_{\t} =C(n,\sigma) r^{1-2n}\langle  \frac{x}{r^2}, \tau \rangle&
\textrm { in } \quad\R^n
\backslash \B \vspace{3mm}\\
   F_{\t}  =0&\quad
\textrm{ on }  \partial \B,
\end{cases}
\end{align}
where  
\begin{equation}\label{eq:cn}
C(n,\sigma):=\dfrac{(n-1)(n-4)(n-2)^2}{2} \sigma.
\end{equation}
We observe from \eqref{eq:vectuni-in-lem} that 
\begin{align}\label{eq:def-tow}
[D_w\big|_{(\rho,\tau, w)=(0,0,0)} \widehat{\nu}^i_{g_{\rho}}(\rho^{n-1}w)]\cdot w=0=[D_\tau \big|_{(\rho,\tau, w)=(0,0,0)} \widehat{\nu}^i_{g_{\rho}}(\rho^{n-1}w)]\cdot \tau,
\end{align}
 which  combined with  \eqref{equ:H} and \eqref{eq:defG} allow to get 
 \begin{equation}\label{eq:dernortau}
[D_\tau \big|_{(\rho,\tau, w)=(0,0,0)} G(\rho,\tau, w)] \cdot \tau =\dfrac{(n-1)(n-2)}{2} \sigma \langle x, \tau \rangle-x \cdot \n F_{\t}.
\end{equation}

Next, we show that  
\begin{equation}\label{eq:cdertau}
\de_{\nu } F_{\t}=    \frac{C(n,\sigma)}{2(n-1)}\la \cdot, \t\ra
\qquad \textrm{ on }   \Ss, \quad \textrm{with}  \quad \nu=-x.
\end{equation}
Indeed, let 
$$
\Pi: L^2( \Ss ) \to L^2( \Ss )
$$
be the orthogonal projection on $\textrm{span}\{x_1;\dots ;x_n\}$
and consider  the function
$$
X^i(x):=\mathbb{K}(x^i)=|x|^{2-n}\frac{x^i}{|x|^2}.
$$
We know that $\D_0 X^i=0$ in $\R^n\setminus\{0\}$. We multiply
\eqref{eq:def-Psi-tau} with $X^i$ and integrate by parts to get
\begin{align}\label{eq:dernorss}
\int_{\Ss}\de_{\nu}  F_{\t}(x) x^i \, d\mu_S&=C(n,\sigma)\int_{\R^n
\backslash\B} |x|^{-1-n}  |x|^{2-n}\la\frac{x}{|x|^2},\t \ra  X^i(x)
\, d\mu \nonumber\\
&=C(n,\sigma)\t^i \int_{\Ss}(x^i)^2d\mu\int_1^\infty r^{1-3n} r^{n-1}\,
dr\nonumber.
\end{align}
This implies that \be \int_{\Ss}\de_{\nu}  F_{\t}(x) x^i \,
d\mu_S=\,\frac{C(n,\sigma)|\partial\B|}{2n-1}\t^i=  \frac{C(n,\sigma)}{2n-1} \int_{\Ss} \la
x,\t\ra x^i  d\mu_S. \ee
From this, we deduce that \be \label{eq:Project-Psit}
 \Pi \de_{\nu } F_{\t}(x)=    \frac{C(n,\sigma)\la \t,x\ra}{2n-1} \qquad \textrm{ for  } x\in  \Ss.
\ee 

We \textbf{claimed} that \be \label{eq:Project-Psit-perp}
 \Pi^\perp  \de_{\nu }F_{\t} = 0. 
\ee
To see this, we let $Y^k_i$ be the spherical harmonics for which
$Y^1_i=x^i$ for $i=1,\dots,n$, corresponding to the eigenvalues
$k(k+n-2)$ on sphere.  We suppose that $k\neq 1$. We then define
$$
X^k_i(x)= \mathbb{K}( Y^k_i)(x).
$$
Then $X^k_i$ are admissible test functions in
\eqref{eq:def-Psi-tau}. We observe that the right hand side in
\eqref{eq:def-Psi-tau} is in $L^2(\R^n\setminus \B)$. Therefore by
simple arguments, we have that  $F_\t\in H^1(\R^n\setminus \B )
$. Using the decomposition in spherical harmonics of $ F_\t$, we
can see that $F_\t(x)=f(|x|) \la x, \t\ra$, for some some
function $f$. From this
 we can multiply \eqref{eq:def-Psi-tau} by $X^k_i$ and use the Gauss-Green formula 
 to  deduce that
$$
\int_{\Ss}\de_{\nu}  F_{\t}(x) Y^k_i(x) \, d\mu_S=\t^i \int_{\R^n
\backslash\B} |x|^{-n}  |x|^{-n}\  x^i  X^k_i(x) \, d\mu=0,\qquad
k\neq 1,
$$
as claimed.
Gathering \eqref{eq:cn}, \eqref{eq:dernortau}  and \eqref{eq:cdertau}, we obtain \eqref{eq:dertauG}.   

To complete the proof of Proposition \ref{eq:diffG}, we diffferentiate   \eqref{eq:extremal-capacitor222} with respect to $w$ at $(\rho,\tau, w)=(0,0,0)$  to get a function 
\begin{align}\label{eq:defww}
\psi_{w}:=\frac{1}{n-2} [D_w \big|_{(\rho,\tau, w)=(0,0,0)} \overline{\Psi}_{\rho,\tau, w}] \cdot w 
\end{align}
which solve uniquely
\begin{align}\label{eq:def-w1}
\begin{cases}
\Delta  \psi_{w} = 0&
\textrm { in } \quad\R^n
\backslash \B \vspace{3mm}\\
   \psi_{w}  =w&\quad
\textrm{ on }  \partial \B.
\end{cases}
\end{align}
The equality in \eqref{eq:derw} then follows from  \eqref{equ:solve2}, \eqref{eq:defww}  and  \eqref{eq:def-tow}. 
\end {proof}

\subsection{The spectral properties  of the operator $\mathbb{L}$}\label{secapecta}
We  study the spectral properties of the operator $\mathbb{L}$  in \eqref{eq:derw} defined by 
$$\mathbb{L}(w):=\partial_{\nu}\psi_{w}-(n-1)w,$$
 where  $\psi_{w}$ satisfies \eqref{eq:extremal-capcff1}.
We consider the Kelvin transform (see \eqref{defn-kelvin-trans} and
\eqref{kelvin-trans-law}) of $\psi_w$
$$
\mathbb{K}(\psi_w) (x) = |x|^{2-n}
\psi_w\left(\frac{x}{|x|^2}\right),
$$
which satisfies
$$
\Delta  (\mathbb{K}(\psi_w) ) =0\qquad \textrm{ in }
\B\setminus\{0\}.
$$
Moreover by direct computations, we have
\begin{equation}\label{eq:dd}
 \nabla (\mathbb{K} (\psi_w))\cdot x= -\nabla \psi_w\cdot x   +
(2-n)  w=\partial_{\nu}\psi_{w}+ (2-n)w   \qquad \textrm{ in }
\partial\B .
\end{equation}
Since $\psi_w\in\mathcal{C}^{2,\alpha}_\nu(\R^n \backslash \B)$ with
$\nu\in(2-n,0)$, we see that
$$
\lim_{|x|\to \infty}  |x|^{2-n}\psi_w(x)=0.
$$
Therefore
$$
\lim_{|x|\to 0}  |x|^{n-2}\mathbb{K}(\psi_w)(x)=0,
$$
so the origin is a removable singularity  for  $\mathbb{K} (\psi_w)$ and  thus
$\mathbb{K}(\psi_w)$ solves
\begin{equation}\label{eq:extre-capa5}
\begin{cases}
\Delta \mathbb{K}( \psi_w)=0  \quad \textrm { in } \quad\B
\vspace{3mm}\\
   \mathbb{K}(\psi_w)=w \quad
\textrm{ on }\quad \partial \B. \vspace{3mm}
\end{cases}
\end{equation}
By elliptic regularity theory, $\mathbb{K}(\psi_w)\in
\mathcal{C}^{2,\alpha}(\overline{\B}) $. From \eqref{eq:dd} and the
definition of $\mathbb{L}$ we get
$$
  w\mapsto \mathbb{L}(w)=\partial_{\tilde{\nu}}(\mathbb{K}(\psi_w))-w,
$$
where $\tilde{\nu}=\Theta$.

Thank to \cite{RaSa}, the spectrum of the operator $\mathbb{L}$ is
given by
\begin{equation}\label{eq; spec}
\lambda_k=k-1,\quad k\in \mathbb{N}
\end{equation}
meaning that the kernel of the operator $\mathbb{L}$ is given by the
space $V_1$  spanned by linear coordinates on the sphere
$\mathbf{S}$
$$V_1:=\bigl\{\Theta^i, i=1,...,n\bigl\}.$$
Moreover there exists a constant $C>0$ such that 
\begin{equation}\label{eq:inverl}
||w||_{C^{2,\alpha}} \leqslant C ||\mathbb{L}(w)||_{C^{1,\alpha}(\mathbf{S})},
\end{equation}
provided $w \in \Pi^{\perp} C^{,\alpha}(\mathbf{S}).$

\section{Solving the nonlinear problem} \label{sec:nonlinear} 

Define the mapping 
\begin{equation}\label{eq:nolinear2}
\widetilde{G}(\rho, \tau, w):=   G(\rho, \tau, w)+\dfrac{(n-1)(n-2)^2}{2} \sigma\partial_{\nu} \mathcal{K} =G(\rho, \tau, w)-\dfrac{(n-1)(n-2)^2}{2} \sigma x \cdot \n \mathcal{K} ,
\end{equation}
where $\mathcal{K} $ is the unique solution of 
\begin{equation}\label{eq:epsi}
\begin{cases}
\Delta \mathcal{K} &=  r^{1-2n} \quad
\textrm { in } \quad\R^n
\backslash \B \vspace{4mm}\\
\mathcal{K} &=0  \quad \textrm{ on }\quad
\partial \B.
\end{cases}
\end{equation}
Then from  \eqref{eq:nolinear2} and   \eqref{eq:extremal-capacitor222},
\begin{equation}\label{eq:beinimplici}
\widetilde{G}(0,0,0)=0. 
\end{equation}
We want to prove that provided $\rho$ is small, we can  find $\tau$ and $w$ such that 
\begin{equation}\label{eq:sonenonline}
\widetilde{G}(\rho, \tau, w)=0. 
\end{equation}

We denote by  $\Pi$ the  orthogonal projection from $\mathcal{C}^{1,\alpha}(\Ss)$ 
onto $V_1$ and  $T:V_1 \rightarrow \R^n$ the  isomorphisim sending $x_i\big|_{\partial \B}$ 
to $e_i$. We also define  $\widetilde{\Pi}:=T\circ \Pi$,
$$\widetilde{K}:=\widetilde{\Pi}\circ \widetilde{G}:   (0,\rho_0)\times
\B_{c_0}(0)\times \B_{c_0}(0)   \longrightarrow \R^n $$ and consider  the equation 
\begin{equation}\label{eq:nonlietosolv}
\widetilde{K}(\rho, \tau, w)=0. 
\end{equation} The mapping  $\widetilde{K}$ has the following properties:
\begin{itemize}
\item
$\widetilde{K}(0, 0, 0)=0$. This is from \eqref{eq:beinimplici},
\item
$D_\tau \big|_{(\rho,\tau, w)=(0,0,0)} \widetilde{K}$
is a  the identity in $\R^n$ times a constant, which  follows from \eqref{eq:dertauG}. 
\end{itemize}
Applying the implicit function theorem, 
we find a unique smooth mapping  $$(-r_0,r_0)\times\B_{k_1}(0)\longrightarrow \B_{k_2}(0) \subset \R^n, \quad (\rho, w)\mapsto \tau(\rho, w)$$  defined for some positive constants $r_0$, $k_1$  and  $k_2$ such that 
\begin{equation}\label{eq:impli1}
\widetilde{K}(\rho, \tau(\rho,w), w)=0 \quad\textrm{ for all} \quad  (\rho, w)\in  (-r_0,r_0)\times\B_{k_0}(0).
\end{equation}
Net we provide estimates for the function $\tau(\rho,w)$ in \eqref{eq:impli1}.
Observe that since   $ \widetilde{\Pi}\circ  \mathbb{L}=0,$  \eqref{eq:nonlietosolv} is equivalent to 
\begin{equation}\label{eq:proj}
C'(n,\sigma) \tau+\widetilde{\Pi} [\rho^{1-n}P_1(1/ \rho,x, \rho^{n-1}w,\tau)]=0\quad \textrm{on}\quad \mathbf{S},
\end{equation}
where $C'(n,\sigma)$ is the constant appearing in \eqref{eq:dertauG}. This can be seen by writing the Taylor expansion of $G$ using  \eqref{eq:dertauG} and  \eqref{eq:derw}. We then deduce  from \eqref{eq:estiL}  and  \eqref{eq:def-of-P-i} the estimates
\begin{equation}\label{eq:derivestau}
|\tau(\rho, w)|\leq C\rho\quad \textrm{and}\quad |D_w \tau(\rho, w)|\leq C\rho^{n-1}.
\end{equation}
Now replace $\tau$ by $\tau(\rho, w)$ in  \eqref{eq:nolinear2} and consider the equation 
\begin{equation}\label{eq:nonliew}
\Pi^{\perp}(\widetilde{G}(\rho, \tau, w))=0. 
\end{equation}
From \eqref{eq:derw} and the estimates in \eqref{eq:derivestau},
\begin{equation}\label{eq:derw2}
D_w \big|_{(\rho,\tau, w)=(0,0,0)} \Pi^{\perp}\circ \widetilde{G}=\mathbb{L}: V_1^{\perp}\longrightarrow \mathbb{L}(V_1^{\perp}),
\end{equation}
which is an isomorphisim from Subsection \ref{secapecta}. Hence, there exists a unique solution $w(\rho)$ to \eqref{eq:nonliew}  for small  $\rho\in (0,R_0)$.\\
Using \eqref{eq:dertauG} and  \eqref{eq:derw}, we have
\begin{equation}\label{eq:proj}
\mathbb{L}(w_\rho)+\Pi^{\perp}[\rho^{1-n}P_1(1/ \rho,x, \rho^{n-1}w,\tau)]=0\quad \textrm{on}\quad \mathbf{S},
\end{equation}
and  from  \eqref{eq:inverl} and \eqref{eq:def-of-P-i},
\begin{equation}\label{eq:estiw}
||w_\rho||_{C^{2,\alpha}(\mathbf{S})} \leq C \rho.  
\end{equation}

Decreasing $R_0$ if necessary, the  analysis of the previous section establishes the first
statement of Theorem \ref{main-thm} with   $\rho_0=\frac{1}{R_0}$ and $$\Omega_{\rho}=\widetilde{\Omega_{\frac{1}{\rho}}},\quad \rho\in (\rho_0,+\infty),$$
where 
\begin{equation}\label{eq:domain}
\widetilde{\Omega_{\rho}}:= \Phi_{\frac{1}{\rho}, \tau(\rho, w(\rho)), \rho^{n-1}w(\rho)}(\R^n\backslash \mathbf{B}),\quad \rho \in (0, R_0).
\end{equation}
In addition, recalling \eqref{eq:chanageof metric}, we have $\widehat \nu_{\widehat{g}}=\rho \widehat \nu_{g_{\rho}}$ for small $\rho$ and from \eqref{eq: der1v2}  and \eqref{eq:nolinear2}, we see that  the constant $C(\rho, \sigma, n)$ in  Theorem \ref{main-thm}   is given by  

\begin{equation}\label{eqNeumanncostant}
C(\rho, \sigma, n)=\frac{n-2}{\rho}(1-\frac{\sigma}{2}\rho^{1-n})-\dfrac{(n-1)(n-2)^2}{2} \sigma\rho^{-n}x\cdot \n \mathcal{K}(x)\big|_{\mathbf{S}},
\end{equation}
where  $\mathcal{K}$ is the unique solution of \eqref{eq:epsi}.
It is plain that the function  $\mathcal{K}$ is radial. By  Gauss-Green formula, we get
$\mathcal{K}'(1)=-\frac{1}{n-1}$ and we deduce
\begin{equation}\label{eqNeumanncost}
C(\rho, \sigma, n)=\frac{n-2}{\rho}+\dfrac{(n-2)(n-3)}{2\rho^{n}} \sigma.
\end{equation}

It remains to show that the
family $(\partial \O_\rho)_{\rho\in(\rho_0, +\infty)}$ constitutes a smooth foliation.

\section{Foliation by boundaries of extremal capacitors}
\label{sec:foliation} 

\begin{prop}\label{eq:folia2}
There exists a constant $\rho_0>1$ such that the family $(\partial
K_\rho)_{\rho>\rho_0}$ constitutes a smooth foliation.
\end{prop}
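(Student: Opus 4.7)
The plan is to show that the natural parameterization
\[
\mathcal{F}: (\rho_0, \infty) \times \Ss \longrightarrow M \setminus K_{\rho_0}, \qquad \mathcal{F}(\rho, \Theta) = \Phi_{1/\rho,\,\rho^{n-1} w_\rho,\,\tau_\rho}(\Theta),
\]
coming from \eqref{eq:domain} is a diffeomorphism; the leaves $\partial K_\rho = \mathcal{F}(\{\rho\}\times \Ss)$ then automatically form a smooth foliation. Since $\Phi$ is itself a diffeomorphism from $\R^n \setminus \mathbf{B}$ onto $M\setminus K$, I would pull the problem back to Euclidean space and instead analyze the map
\[
F(\rho,\Theta) = \rho\,\tau_{1/\rho} + \rho\,\Theta + \rho^{2-n}\,w_{1/\rho}(\Theta)\,\Theta,
\]
obtained from \eqref{eq:param} after the change of scale $s = 1/\rho$.

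First I would note that $s \mapsto w_s \in \mathcal{C}^{2,\alpha}(\Ss)$ and $s \mapsto \tau_s \in \R^n$ are smooth near $s=0$, as outputs of the two implicit function theorem applications at \eqref{eq:impli1} and \eqref{eq:nonliew}; hence $F$ is smooth on $(\rho_0,\infty)\times\Ss$. Next I would differentiate, using the size bounds \eqref{eq:derivestau} and \eqref{eq:estiw} together with derivative estimates on $s \mapsto w_s,\tau_s$, to obtain
\[
\frac{\partial F}{\partial \rho}(\rho,\Theta) = \Theta + \mathcal{O}(\rho^{-1}), \qquad \partial_{\Theta^k} F(\rho,\Theta) = \rho\,\partial_{\Theta^k}\Theta + \mathcal{O}(\rho^{2-n}),
\]
uniformly in $\Theta$. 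For $\rho_0$ sufficiently large the tangential derivatives span a small perturbation of $T_\Theta \Ss$, which lies in the hyperplane orthogonal to $\Theta$, while $\partial_\rho F$ has radial component $1+\mathcal{O}(\rho^{-1})>0$; thus $DF$ has full rank everywhere and $F$ is a local diffeomorphism.

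For global injectivity and surjectivity I would use monotonic nesting. The same expansion gives $|F(\rho,\Theta)| = \rho + \mathcal{O}(1)$ and
\[
\frac{\partial}{\partial \rho}|F(\rho,\Theta)|^2 = 2\,\langle F, \partial_\rho F\rangle = 2\rho + \mathcal{O}(1) > 0
\]
for $\rho > \rho_0$, after possibly enlarging $\rho_0$. Hence, along each radial direction the Euclidean norm on $F(\rho,\Ss)$ is strictly increasing in $\rho$, so the compact sets $K_\rho$ are strictly nested; combined with the local diffeomorphism property this forces $F$ to be bijective from $(\rho_0,\infty)\times \Ss$ onto its image, which exhausts $\R^n \setminus \overline{F((\rho_0,\Ss))}$ by the intermediate value theorem applied along each ray.

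The principal technical obstacle is obtaining uniform control on $\partial_s w_s$ and $\partial_s \tau_s$, which is not directly stated in the earlier sections — the estimates \eqref{eq:derivestau} and \eqref{eq:estiw} bound only $\tau_s$ and $w_s$ themselves. I would close this gap by differentiating the implicit relations \eqref{eq:impli1} and \eqref{eq:nonliew} in $s$, solving the resulting linear equations using the uniform invertibility of $D_\tau \widetilde{K}$ and of $\mathbb{L}$ on $V_1^\perp$ (see \eqref{eq:inverl} and Subsection \ref{secapecta}), and estimating the $s$-derivatives of the error terms $P_i$ via \eqref{eq:def-Pi}. Once these derivative estimates are in place, the local diffeomorphism together with the nesting argument above establishes Proposition \ref{eq:folia2}.
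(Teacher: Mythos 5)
Your overall strategy --- express the leaves in a way that makes a monotonicity in $\rho$ visible, and deduce the foliation from that monotonicity together with the smallness estimates on $\tau(\rho,w)$ and $w_\rho$ --- is the same as the paper's, and your observation that uniform bounds on $\partial_\rho \tau$ and $\partial_\rho w_\rho$ are needed but not explicitly recorded in Sections 5--6 is correct; differentiating the implicit equations \eqref{eq:impli1} and \eqref{eq:nonliew} and invoking the uniform invertibility of $D_\tau\widetilde K$ and of $\mathbb L|_{V_1^\perp}$ is indeed how one gets them, and the paper uses exactly these derivatives when it writes out $\partial h/\partial\rho$.

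However, there is a genuine gap in your nesting step. You compute $\partial_\rho |F(\rho,\Theta)|^2 = 2\rho + \mathcal{O}(1) > 0$ for each fixed $\Theta$ and then assert that ``along each radial direction the Euclidean norm is strictly increasing,'' concluding nesting and then bijectivity ``by the intermediate value theorem applied along each ray.'' But the curve $\rho\mapsto F(\rho,\Theta)$ for fixed $\Theta$ is \emph{not} a ray: its angular part $F(\rho,\Theta)/|F(\rho,\Theta)|$ drifts with $\rho$, because $\tau_{1/\rho}$ and $w_{1/\rho}$ depend on $\rho$. Monotonicity of $|F(\rho,\Theta)|$ along these slanted curves does not by itself rule out the leaves touching or crossing, nor does it set up an intermediate-value argument along genuine rays. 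The paper's proof contains precisely the extra device that closes this gap: it introduces the radial projection $v(\rho,x)=h(\rho,x)/|h(\rho,x)|_{g_\rho}$, checks (using \eqref{eq:derivestau} and \eqref{eq:estiw}) that $v(\rho,\cdot)$ is a diffeomorphism of $\Ss$ close to the identity, and then reparametrizes each leaf as an honest radial graph $y\mapsto\tilde\varphi(\rho,y)y$ with $\tilde\varphi(\rho,y)=|h(\rho,v^{-1}(\rho,y))|_{g_\rho}$. The monotonicity is then proved for $\tilde\varphi(\cdot,y)$ with $y$ fixed --- i.e., genuinely along rays --- and that, not $\partial_\rho|F(\rho,\Theta)|^2>0$, is what yields disjoint nested leaves and exhaustion. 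Your argument can be repaired by inserting this reparametrization (equivalently, by composing $F(\rho,\cdot)$ on the right with $v^{-1}(\rho,\cdot)$ before differentiating in $\rho$), but as written the nesting claim does not follow from what you computed.
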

\begin{proof}

We are proving that the family $(\partial \O_\rho)_{\rho \in(\rho_0, +\infty)}$  constitutes a foliation of $M\backslash \O_{\rho_0}$. 
The proof is inspired by the argument in  \cite[Section 5]{FM} and  \cite[Pages 9-10]{RY}.  

Notice that $\partial \widetilde{\O_\rho}$ is given by $$\partial \widetilde{\O_\rho}=\Phi(\widetilde{S}_{\frac{1}{\rho}, \tau(\rho, w(\rho)), \rho^{n-1}w(\rho)}),$$
where 
\begin{equation}\label{eq:folia1}
\widetilde{S}_{\frac{1}{\rho}, \tau, w}=\biggl\{y=\frac{1}{\rho}\biggl(x + \tau +w(x)x\biggl),\quad
x\in \Ss \biggl\}.
\end{equation}
We define the functions
$$h(\rho,x):=\frac{1}{\rho}\biggl(x + \tau(\rho, w(\rho))+\rho^{n-1}w_\rho(x) x\biggl)\quad\textrm{and}\quad
v(\rho,x):=\frac{h(\rho,x)}{|h(\rho,x)|_{g_{\rho}}},
\quad x\in \Ss.$$ 

By the estimates in \eqref{eq:derivestau} and \eqref{eq:estiw},  the function $v(\rho,\cdot)$ extends smoothly at
$\rho=0$ with $v(0,\cdot)=I_{\Ss}$ and for all $\rho$
small $v(\rho,\cdot)$  is a diffeomorphism from $\Ss$ into
itself.

Thus for all $y\in \Ss$, $$h(\rho,v^{-1}(\rho,
y))=|h(\rho,v^{-1}(\rho,
y))|_{g_{\rho}}y.$$  We put
$$\tilde{\varphi}(\rho,y):=|h(\rho,v^{-1}(\rho,
y))|_{g_{\rho}},$$ where $v^{-1}(\rho, \cdot)$
denotes the inverse of $v(\rho, \cdot)$.  Then 
\begin{equation}\label{eq:folia2}
\widetilde{S}_{\frac{1}{\rho}, \tau(\rho, w(\rho)), \rho^{n-1}w(\rho)}=\widetilde{S}_{\rho}:=\biggl\{\tilde{\varphi}(\rho,y)y,
\quad y\in \Ss\biggl\}.
 \end{equation}
 
Using  the estimates in \eqref{eq:derivestau} and \eqref{eq:estiw} once again, we find 
\begin{align*}
D_{x}h&=\frac{1}{\rho}\biggl(I_{\Ss}+\rho^{n-1}L^1(w_\rho)x+\rho^{n-1}w_\rho(x)I_{\Ss}\biggl)=\frac{1}{\rho}(I_{\Ss}+O(\rho)) \\
\frac{\partial h}{\partial
\rho}&=-\frac{1}{\rho^2}\biggl(x+\tau -\rho \frac{\partial \tau}{\partial \rho}-\rho D_w\tau(\rho,w(\rho))w'_\rho+\rho^{n} w'_\rho(x) x-(n-2)\rho^{n-1}w_\rho(x)x\biggl)\nonumber\\
&=-\frac{1}{\rho^2}(x+O(\rho) ).
\end{align*}
Also, since $$|v^{-1}(\rho, y)|_{g_{\rho}}=1\quad\textrm{for
all}\quad\rho\in(0,R_0)\quad\textrm{and}\quad y\in
\Ss,$$ we have $$\langle v^{-1}(\rho,
y),\partial_{\rho}v^{-1}(\rho,
y)\rangle_{{g}_{\rho}}=0\quad\textrm{for
all}\quad\rho \in(0,R_0)\quad\textrm{and}\quad y\in
\Ss.$$

 Using this, we  then obtain
 \begin{align*}\frac{\partial
\tilde{\varphi}}{\partial \rho
}&=\frac{1}{|h(\rho,v^{-1}(\rho,
y))|_{{g}_{\rho}}}\langle
h(\rho,v^{-1}(\rho, y)),\frac{\partial h}{\partial
\rho}(\rho,v^{-1}(\rho, y))+D_xh(\partial_{\rho}
v^{-1})\rangle_{{g}_{\rho}}\\
&=\frac{1}{|v^{-1}(\rho,
y)+O(\rho)|_{{g}_{\rho}}}\langle v^{-1}(\rho,
y)+O(\rho),-\frac{1}{\rho}v^{-1}(\rho,
y)+\partial_{\rho}
v^{-1}+O(\rho)\rangle_{{g}_{\rho}}\\
&=\frac{1}{\rho|v^{-1}(\rho,
y)+O(\rho)|_{{g}_{\rho}}}\langle v^{-1}(\rho,
y)+O(\rho),-v^{-1}(\rho,
y)+\rho\partial_{\rho}
v^{-1}+O(\rho^2)\rangle_{{g}_{\rho}}\\
&=-\frac{1}{\rho}(1+O(\rho)).
\end{align*}
We conclude the the function $\tilde{\varphi}(\rho, x)$ is
strictly decreasing with respect to $\rho$ for $\rho$
small or equivalently $\tilde{\varphi}(\rho^{-1}, x)$ is strictly
increasing for $\rho$ large.  Thank to \eqref{eq:folia2},  the family
$(\widetilde{S}_{\frac{1}{\rho}})_{\rho>\rho_0}$ constitutes
a foliation of $\R^n\backslash \mathbf{B}_{\rho_0}$.  Since $\Phi$ is a diffeomorphism and $\partial \O_\rho=\Phi(\widetilde{S}_{\frac{1}{\rho}})$, we deduce that the family $(\partial \O_\rho)_{\rho\in(\rho_0, +\infty)}$ foliates $M\backslash \O_{\rho_0}$ and the proof of Theorem  \ref{main-thm} is complete. 
\end{proof}

\appendix 
\section{Variational setting}
\label{sec:euler-lagrange}

In this section we define two scale-invariant energies associated with 
capacity, and compute their first variations. We perform 
the computations below in Euclidean space, but again it is 
an easy exercise to carry them out in a Riemannian manifold with 
one asymptotically flat end. 

We earlier 
defined the capacity function $\operatorname{Cap}$  on compact 
sets $K \subset \R^n$ in \eqref{cap-defn}. A change of variables 
shows us the scaling law 
\begin {equation} \label{cap-scaling}
\operatorname{Cap}(RK) = R^{n-2} \operatorname{Cap}(K) \end{equation} 
for any $R>0$, and a quick compation gives $\operatorname
{Cap}(\overline{\mathbf{B}}_R) = R^{n-2}$, with the equilibrium potention 
function $U(x) = R^{n-2} |x|^{2-n}$. As we discussed in the introduction, 
one can either normalize using volume or surface area, leading to the 
following two scale-invariant functionals:
\begin {equation} \label{energy} 
\mathcal{E}_0(K) = \frac{\operatorname{Cap}(K)}{|K|^{\frac{n-2}{n}}}, \qquad 
\mathcal{E}_1(K) = \frac{\operatorname{Cap}(K)}{|\Sigma|^{\frac{n-2}{n-1}}}.
\end {equation}
By \eqref{cap-scaling} both $\mathcal{E}_0$ and $\mathcal{E}_1$ are scale-invariant. 

To compute the first variation of both $\mathcal{E}_0$ and $\mathcal{E}_1$ 
we let $X:(-\epsilon, \epsilon) \times \R^n \rightarrow \R^n$ be a vector field, 
and let $\xi$ be its flow, defined by 
$$\xi:(-\epsilon, \epsilon) \times \R^n \rightarrow \R^n, \qquad 
\frac{\partial \xi}{\partial t} (t,x) = X(t,x), \qquad \xi(0,x) = x. $$
Let $K_t = \xi(t,\cdot)(K)$ and let $U_t$ be the solution to \eqref{cap-pde} 
in $\Omega_t = \R^n \backslash K_t$. It will also be convenient to denote 
$\Sigma = \partial K = \partial \Omega$, and $\Sigma_t = \partial K_t = 
\partial \Omega_t$. 

\begin {lem} 
We have 
\begin {equation} \label{cap-EL} 
\left. \frac{d}{dt} \operatorname{Cap}(K_t) \right |_{t=0} = \frac{1}{n(n-2)\omega_n 
|K|^{\frac{n-2}{n}}} \left [ \frac{(n-2)^2 \omega_n \operatorname{Cap}(K)}{|K|} 
\int_\Sigma \langle X, \eta d\sigma - \int_\Sigma \langle X, \eta \rangle 
\left ( \frac{\partial U}{\partial \eta} \right )^2 d\sigma \right ], \end {equation} 
where $\eta$ is the unit normal vector pointing into $K$. 
\end {lem}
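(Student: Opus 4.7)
The plan is to express the capacity as the Dirichlet integral of the equilibrium potential and differentiate using Hadamard's domain variation formula. Since the normalizing factor $|K|^{(n-2)/n}$ appears in the right-hand side of \eqref{cap-EL} while only $\operatorname{Cap}(K_t)$ is written on the left, I interpret the identity as the first variation of $\mathcal{E}_0$ (there is presumably a typo on the left-hand side), and I will obtain it by combining the raw variation of the Dirichlet integral with the quotient rule applied to $\mathcal{E}_0 = \operatorname{Cap}(K)/|K|^{(n-2)/n}$.

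First I would write $\operatorname{Cap}(K_t) = \frac{1}{n(n-2)\omega_n} \int_{\Omega_t} |\nabla U_t|^2 dx$ and apply Reynolds' transport theorem to obtain
\[
\frac{d}{dt}\int_{\Omega_t} |\nabla U_t|^2 dx = \int_{\Omega_t} 2\,\nabla U_t \cdot \nabla \dot U_t \, dx + \int_{\Sigma_t} |\nabla U_t|^2\, \langle X, \eta\rangle\, d\sigma,
\]
the boundary sign arising because $\eta$ (pointing into $K_t$) is the \emph{outward} normal of $\Omega_t = \R^n\setminus K_t$. An integration by parts, together with $\Delta_0 U_t = 0$, collapses the bulk term to $2\int_{\Sigma_t} \dot U_t\, \partial_\eta U_t \, d\sigma$; the boundary contribution at infinity vanishes using the decay $U_t(x) = O(|x|^{2-n})$, $\nabla U_t(x) = O(|x|^{1-n})$.

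The main technical step is the material-derivative identity for $\dot U_t$ on $\Sigma$, which I would obtain by differentiating the boundary identity $U_t(\xi(t,x)) \equiv 1$ for $x\in\Sigma$. This gives $\dot U + \nabla U \cdot X = 0$, and since $U\equiv 1$ on $\Sigma$ forces $\nabla U|_\Sigma = (\partial_\eta U)\eta$, we get $\dot U|_\Sigma = -(\partial_\eta U)\langle X,\eta\rangle$. Substituting and using $|\nabla U|^2 = (\partial_\eta U)^2$ on $\Sigma$, the two boundary contributions combine to yield
\[
\frac{d}{dt}\operatorname{Cap}(K_t)\Big|_{t=0} = -\frac{1}{n(n-2)\omega_n}\int_\Sigma (\partial_\eta U)^2\, \langle X,\eta\rangle\, d\sigma.
\]

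To finish, I would differentiate $|K_t|$ via the same Reynolds argument, noting that with $\eta$ pointing into $K$ one has $\frac{d}{dt}|K_t||_{t=0} = -\int_\Sigma \langle X,\eta\rangle\, d\sigma$, and then apply the quotient rule to $\mathcal{E}_0 = \operatorname{Cap}(K)/|K|^{(n-2)/n}$. Factoring out $1/(n(n-2)\omega_n |K|^{(n-2)/n})$ from the resulting two terms produces exactly the bracketed expression on the right of \eqref{cap-EL}. The delicate point is the computation of $\dot U|_\Sigma$ and correctly tracking the orientation of $\eta$ through the integration-by-parts and Reynolds steps; everything else is a bookkeeping exercise.
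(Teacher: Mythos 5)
Your proof is correct and follows essentially the same route as the paper's: Reynolds/Hadamard differentiation of the Dirichlet integral, the boundary identity $\dot U|_\Sigma = -\langle X,\eta\rangle\,\partial_\eta U$ obtained from $U_t\circ\xi(t,\cdot)\equiv 1$ on $\Sigma$, integration by parts using $\Delta_0 U=0$ and the decay at infinity, and then the quotient rule against $|K_t|^{(n-2)/n}$. You also correctly diagnose the typo in the statement: the left-hand side should read $\left.\frac{d}{dt}\mathcal{E}_0(K_t)\right|_{t=0}$, as the paper's own proof confirms.
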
 

\begin {proof} 
Observe that $\left. U_t \right |_{\Sigma_t} = 0$. Differentiating this boundary condition, 
we see 
$$ \frac{\partial U}{\partial t} = - \langle X, \eta \rangle \frac{\partial U}{\partial \eta} 
\textrm{ on } \Sigma_t.$$
\begin {eqnarray*} 
\left. \frac{d}{dt} \right |_{t=0} \mathcal{E}_0(K_t) & = & \frac{1}{n(n-2)\omega_n} 
\left. \frac{d}{dt} \right |_{t=0} |K|^{\frac{2-n}{n}} \int_{\Omega_t} |\nabla U_t|^2 dx \\ 
& = & \frac{(n-2)}{n} \frac{1}{n(n-2)\omega_n |K|^{\frac{n-2}{n} -1} } \int_\Sigma \langle X, \eta
\rangle d\sigma \\ 
& & + \frac{1}{n(n-2)\omega_n|K|^{\frac{n-2}{n}}} \left [ 2 \int_\Omega \left \langle 
\nabla U, \nabla \frac{\partial U}{\partial t} \right \rangle dx + \int_\Sigma 
\langle X, \eta \rangle |\nabla U|^2 d\sigma \right ] \\ 
& = &  \frac{(n-2)}{n} \frac{1}{n(n-2)\omega_n |K|^{\frac{n-2}{n} -1} } \int_\Sigma \langle X, \eta
\rangle d\sigma \\ 
&& + \frac{1}{n(n-2)\omega_n|K|^{\frac{n-2}{n}}} \left [ \int_\Sigma \langle X, \eta 
\rangle d\sigma - 2 \int_\Omega \frac{\partial U}{\partial t} \Delta U dx + 2 
\int_\Sigma \frac{\partial U}{\partial t} \frac{\partial U}{\partial \eta} \right ] \\ 
& = &  \frac{(n-2)}{n} \frac{1}{n(n-2)\omega_n |K|^{\frac{n-2}{n} -1} } \int_\Sigma \langle X, \eta
\rangle d\sigma \\ 
&&  \frac{1}{n(n-2)\omega_n|K|^{\frac{n-2}{n}}} \left [ \int_\Sigma \langle X, \eta 
\rangle d\sigma \right ] \\ 
& = & \frac{1}{n(n-2)\omega_n 
|K|^{\frac{n-2}{n}}} \left [ \frac{(n-2)^2 \omega_n \operatorname{Cap}(K)}{|K|} 
\int_\Sigma \langle X, \eta d\sigma - \int_\Sigma \langle X, \eta \rangle 
\left ( \frac{\partial U}{\partial \eta} \right )^2 d\sigma \right ]. 
\end {eqnarray*} 
Here we have used the fact that $U$ is constant on $\Sigma$, so $|\nabla U| = 
\frac{\partial U}{\partial \eta}$ there. 
\end {proof}

 \begin {cor} \label{over-determined-cor}
 A compact set $K$ with nonempty interior $\mathcal{O}$ and 
 smooth boundary $\Sigma$ is a critical point of $\mathcal{E}_0$ if and only 
 if $\Omega = \R^n \backslash K$ supports a solution to the over-determined 
 boundary value problem \eqref{eq:capacitor0}.
 \end {cor}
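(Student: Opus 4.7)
The plan is to read both implications directly from the first-variation formula derived in the preceding lemma. Writing $A := (n-2)^2 \omega_n \operatorname{Cap}(K)/|K|$ for brevity, that formula is
$$
\frac{d}{dt}\bigg|_{t=0}\mathcal{E}_0(K_t) = \frac{1}{n(n-2)\omega_n\, |K|^{(n-2)/n}}\int_\Sigma \langle X, \eta\rangle\left[A - \left(\frac{\partial U}{\partial \eta}\right)^2\right]d\sigma,
$$
and one obtains each half of the equivalence by probing with an appropriately chosen vector field $X$.

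For the forward direction I would assume $K$ is critical for $\mathcal{E}_0$, so the first variation vanishes for every $X$. Since the normal components $\langle X,\eta\rangle|_\Sigma$ sweep out all of $C^\infty(\Sigma)$, the fundamental lemma of the calculus of variations forces $(\partial U/\partial \eta)^2 \equiv A$ pointwise on $\Sigma$. The Hopf boundary-point lemma, applied to the harmonic function $U$ with $U=1$ on $\Sigma$ and $U\to 0$ at infinity, shows that $\partial U/\partial\eta$ has a fixed sign on $\Sigma$; hence $\partial U/\partial \eta = \sqrt{A}$ is a positive constant, and $U$ is the required solution of \eqref{eq:capacitor0}.

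For the converse, I would start from a solution of the over-determined problem, so $\partial U/\partial \eta = \Lambda$ on $\Sigma$ for some constant $\Lambda$. The bracket then factors out of the integral, yielding
$$
\frac{d}{dt}\bigg|_{t=0}\mathcal{E}_0(K_t) = \frac{A - \Lambda^2}{n(n-2)\omega_n\, |K|^{(n-2)/n}}\int_\Sigma \langle X, \eta\rangle\,d\sigma,
$$
so it suffices to show $A = \Lambda^2$. Here the scale-invariance $\mathcal{E}_0(RK) = \mathcal{E}_0(K)$ from \eqref{cap-scaling} is the essential tool: the dilation field $X(x) = x$ generates pure rescaling, so the left-hand side above must vanish for that choice. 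Combining this with the divergence-theorem identity $\int_\Sigma \langle x, \eta\rangle\,d\sigma = -n|K|$ (recalling $\eta$ is the \emph{inner} normal to $K$) forces $(A - \Lambda^2)(-n|K|) = 0$, so $\Lambda^2 = A$. The bracket is then identically zero, every variation produces first-variation zero, and $K$ is critical.

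The main subtlety lies in the converse: constancy of $\partial U/\partial \eta$ is a priori weaker than the pointwise vanishing of the bracket needed for criticality, since the value of the constant must additionally match $\sqrt A$. The scale-invariance of $\mathcal{E}_0$ is exactly what encodes this matching condition, and lets one avoid any appeal to a classification result such as Serrin's theorem.
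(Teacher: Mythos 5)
Your proof is correct and follows the same overall framework as the paper (read both implications off the first-variation formula from the preceding lemma), but you are noticeably more careful than the paper on the converse direction, and this is worth flagging. The paper's converse simply states that once $U$ is identified as the equilibrium potential, the vanishing of $\frac{d}{dt}\mathcal{E}_0(K_t)|_{t=0}$ follows ``by \eqref{cap-EL}'' --- but as you observe, constancy of $\partial U/\partial\eta$ alone only factors the bracket out of the integral; it does not make the integral vanish for volume-changing $X$ unless the constant actually equals $\sqrt{A}$, where $A = (n-2)^2\omega_n\operatorname{Cap}(K)/|K|$. Your scale-invariance argument (testing with the dilation field $X(x)=x$, using $\int_\Sigma\langle x,\eta\rangle\,d\sigma = -n|K|$) supplies exactly the missing Pohozaev-type identity forcing $\Lambda^2 = A$, and it does so without any classification result. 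Your forward direction also spells out the sign step via the Hopf lemma, which the paper leaves implicit. In short: same route as the paper, but you have closed a real gap in its converse argument.
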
 
 
  \begin {proof} Setting 
$$\Lambda^2 = \frac{(n-2)^2 \omega_n \operatorname{Cap}(K)}{|K|},$$
we use \eqref{cap-EL} to see that $K$ is a critical point of $\mathcal{E}_0$ 
if and only if 
$$\int_\Sigma \langle X, \eta \rangle \left [ \Lambda^2 - \left ( \frac{\partial U}
{\partial \eta} \right )^2 \right ] d\sigma = 0$$ 
for all possible variation fields $X$, which in turn implies 
\begin {equation} \label{cap-EL2} 
\frac{\partial U}{\partial \eta} = \Lambda = (n-2) \sqrt{\frac{\omega_n \operatorname{Cap}(K)}
{|K|}}\end {equation}
along $\Sigma$. We notice that if $K = \overline \B_\rho$ this constant 
reduces to $\Lambda = \frac{n-2}{\rho}$. 

  Conversely, let $\Omega$ admit a solution to \eqref{eq:capacitor0}. By the uniqueness of 
  solutions to \eqref{cap-pde}, this function must be the equilibrium 
  potential of $K$, so by \eqref{cap-EL} 
  $$ \left. \frac{d}{dt} \mathcal{E}_0(K_t) \right |_{t=0} = 0$$ for all 
  possible variation fields $X$.  
  \end {proof}

 Though we will not use it, we include the following derivation to satisfy the reader's 
  curiousity. 
\begin {lem} 
We have 
$$
-\left. \frac{d}{dt} \mathcal{E}_1 \right |_{t=0} 
= \frac{1}{|\Sigma|^{\frac{n-2}{n-1}}} \left [ \left ( \frac{n-2}{n-1}
\right ) \left ( \frac{\operatorname{Cap}(K)}{|\Sigma|} \right ) \int_\Sigma
v H_\Sigma d\sigma + \frac{1}{n(n-2)\omega_n} \int_\Sigma v \left ( 
\frac{\partial U}{\partial \eta} \right )^2 d\sigma \right ].$$ 
\end {lem}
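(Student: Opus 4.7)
The plan is to mirror the computation already used for $\mathcal{E}_0$, replacing the volume normalization by the surface-area normalization and invoking the standard first-variation formula for area. Write $v = \langle X, \eta\rangle$ and proceed in three steps.

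First, I would isolate the first variation of $\operatorname{Cap}(K_t)$ itself from the calculation in the previous lemma. Differentiating $\operatorname{Cap}(K_t) = \frac{1}{n(n-2)\omega_n}\int_{\Omega_t}|\nabla U_t|^2 dx$ and using Reynolds' transport theorem gives
\[
\frac{d}{dt}\operatorname{Cap}(K_t)\Big|_{t=0} = \frac{1}{n(n-2)\omega_n}\left[2\int_\Omega \langle \nabla U, \nabla \dot U\rangle\,dx + \int_\Sigma v|\nabla U|^2\,d\sigma\right],
\]
where $\dot U = \partial_t U_t|_{t=0}$. Integration by parts, together with $\Delta U = 0$ and the boundary identity $\dot U = -v\,\partial_\eta U$ on $\Sigma$ (obtained by differentiating $U_t|_{\Sigma_t}=1$), converts the bulk term into $-2\int_\Sigma v(\partial_\eta U)^2\,d\sigma$. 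Since $U$ is constant on $\Sigma$ we have $|\nabla U|^2 = (\partial_\eta U)^2$ there, and the two boundary contributions combine to yield
\[
\frac{d}{dt}\operatorname{Cap}(K_t)\Big|_{t=0} = -\frac{1}{n(n-2)\omega_n}\int_\Sigma v\left(\frac{\partial U}{\partial \eta}\right)^2 d\sigma.
\]

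Second, I would record the classical first-variation formula for surface area,
\[
\frac{d}{dt}|\Sigma_t|\Big|_{t=0} = \int_\Sigma v\,H_\Sigma\,d\sigma,
\]
with the sign convention for $H_\Sigma$ compatible with $\eta$ pointing into $K$. This is a standard computation from the coarea/Reynolds formula combined with the fact that the trace of the Weingarten map is the mean curvature.

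Third, I would apply the quotient rule to $\mathcal{E}_1 = \operatorname{Cap}(K)\,|\Sigma|^{-(n-2)/(n-1)}$:
\[
\frac{d}{dt}\mathcal{E}_1\Big|_{t=0} = \frac{1}{|\Sigma|^{\frac{n-2}{n-1}}}\left[\frac{d}{dt}\operatorname{Cap}(K_t)\Big|_{t=0} - \frac{n-2}{n-1}\frac{\operatorname{Cap}(K)}{|\Sigma|}\int_\Sigma v\,H_\Sigma\,d\sigma\right].
\]
Substituting the two ingredients from Steps 1 and 2 and multiplying through by $-1$ produces exactly the claimed identity.

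There is no genuine obstacle here: every step is a routine application of Hadamard's shape-derivative technique. The only points requiring care are the sign convention for the normal $\eta$ (and hence the sign of $H_\Sigma$ and of the boundary identity $\dot U = -v\,\partial_\eta U$), and the vanishing of the boundary contribution at infinity, which follows from the decay $U(x) = \mathcal{O}(|x|^{2-n})$ guaranteed by \eqref{cap-pde}. Once those bookkeeping matters are fixed, the computation is essentially a one-line consequence of the quotient rule.
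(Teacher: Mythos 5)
Your proposal is correct and follows essentially the same route as the paper: differentiate the boundary condition $U_t|_{\Sigma_t}=1$ to get $\partial_t U = -\langle X,\eta\rangle\,\partial_\eta U$ on $\Sigma$, apply the transport theorem and integrate by parts using $\Delta U = 0$ together with $|\nabla U| = |\partial_\eta U|$ on $\Sigma$, and combine with the first variation of area through the normalization factor. The only difference is organizational---you isolate $\frac{d}{dt}\operatorname{Cap}(K_t)\big|_{t=0}$ as a standalone identity and then invoke the quotient rule, whereas the paper performs the same cancellations within a single chain of equalities.
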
 

\begin {proof} 
We begin by differentiating the 
boundary condition $\left. U \right |_{\Sigma} = 1$ to see 
$\frac{\partial U}{\partial t} = - v \frac{\partial U}{\partial \eta} $ on $\Sigma$.
Thus
\begin {eqnarray*} 
\left. \frac{d}{dt} \mathcal{E}_1 \right |_{t=0} & = & 
\left. \frac{d}{dt} \right |_{t=0} \left ( \frac{1}{n(n-2)\omega_n}
|\Sigma|^{\frac{2-n}{n-1}} \int_{\Omega_t} \langle \nabla U_t, 
\nabla U_t \rangle dx \right ) \\ 
& = & \frac{1}{|\Sigma|^{\frac{n-2}{n-1}}} \left ( \frac{2-n}{n-1} \right ) |\Sigma|^{-1}
\frac{1}{n(n-2) \omega_n} \int_\Omega |\nabla U|^2 dx  \int_\Sigma vH_\Sigma 
d\sigma \\ 
&& + \frac{1}{n(n-2) \omega_n |\Sigma|^{\frac{n-2}{n-1}}} \left [ 2 
\int_\Omega \left \langle \nabla U, \nabla \frac{\partial U}{\partial t} 
\right \rangle dx + \int_\Sigma v |\nabla U|^2 d\sigma \right ]  \\ 
& = & - \left ( \frac{n-2}{n-1} \right ) \frac{\operatorname{Cap}(K)}
{|\Sigma|^{\frac{n-2}{n-1} + 1}} \int_\Sigma v H_\Sigma d\sigma  \\ 
&& + \frac{1}{n(n-2) \omega_n |\Sigma|^{\frac{n-2}{n-1}}}
\left [ \int_\Sigma v |\nabla U|^2 d\sigma - 2 \int_\Omega \frac{\partial U}
{\partial t} \Delta_0 U dx + 2 \int_\Sigma \frac{\partial U}{\partial t} 
\frac{\partial U}{\partial \eta} d\sigma \right ] \\
& = & - \frac{1}{|\Sigma|^{\frac{n-2}{n-1}}} \left [ \left ( \frac{n-2}{n-1} \right ) 
\frac{\operatorname{Cap}(K)}{|\Sigma|} \int_\Sigma vH_\Sigma d\sigma
+ \frac{1}{n(n-2)\omega_n} \int_\Sigma v \left ( \frac{\partial U}{\partial \eta} 
\right )^2 d\sigma \right ].
\end {eqnarray*}
Here we've used the fact that $H_\Sigma$ is the first variation of $|\Sigma|$ 
and that $|\nabla U| = - \frac{\partial U}{\partial \eta} $ on $\Sigma$. 
\end{proof} 

One can mimic the proof of Corollary \ref{over-determined-cor} to show 
that critical points of $\mathcal{E}_1$ are precisely those sets 
$K$ which admit a solution to the over-determined boundary value problem 
$$\Delta U = 0, \qquad \left. U \right |_{\partial K} = 1, \qquad \lim_{|x| 
\rightarrow \infty} U(x) = 0, \qquad \frac{\partial U}{\partial \eta} 
= \Lambda H,$$
where $H$ is the mean curvature of $\Sigma = \partial K$.

\begin {thebibliography} {999}


\bibitem{EM} M. Eichmair and J. Metzget. \textsl{Unique isoperimetric foliations 
of asymptotically flat ends in all dimensions.} Invent. Math. {\bf 194} (2013), 591--630. 

\bibitem {FM} M. M. Fall and I. A. Minlend. \textsl{Serrin's
over-determined problem in Riemannian manifolds.\/} Adv. Calc.
Var. {\bf 8} (2015), 371--400.

\bibitem {GNR} M. Goldman, M. Novaga, and B. Ruffini. \textsl{Existence
and stability for a non-local isoperimetric model of charges liquid 
drops.\/} Arch. Rational Mech. Anal. {\bf 217} (2015), 1--36. 

\bibitem {HY} G. Husiken and S.-T. Yau. \textsl{Definition of center of mass for
isolated physical systems and unique foliations by stable spheres
with constant mean curvature.\/} Invent. Math. {\bf 124} (1996), 281--311.

\bibitem {LMS} T. Lamm, J. Metzger, and F. Schulze. \textsl{Foliations of
asymptotically flat manifolds by surfaces of Willmore type.\/} Math. Ann.
{\bf 350} (2011), 1--78.

\bibitem {M} J. Metzger. \textsl{Foliations. of asymptotically flat $3$-manifolds 
by $2$-surfaces of prescribed mean curvature.} J. Differential Geom. 
{\bf 77} (2007), 201--236.

\bibitem {PR} F. Pacard and T. Rivi\`ere. \textsl{Linear and Nonlinear
Aspects of Vortices: the Ginzburg-Landau Model.\/} Birkh\"auser,
2000.


\bibitem{RaSa} S. Raulot and  A. Savo,  On the spectrum of the Dirichlet-to-Neumann operator acting on forms of a Euclidean domain. J. Geom. Phys. 77 (2014) 1-12.


\bibitem {SY2} R. Schoen and S.-T. Yau. \textsl{Conformally flat manifold, Kleinian groups,
and scalar curvature.\/} Invent. Math. {\bf 92} (1988) 47--71.

\bibitem {Ser} J. Serrin. \textsl{A symmetry problem in potential theory.\/} Arch. Rat. Mech. Anal.
{\bf 43} (1971), 304--318.

\bibitem {Str} M. Struwe. \textsl{Variational Methods: Applications to
Nonlinear Partial Differential Equations and Hamiltonian Systems.\/}
Singer-Verlag, 2008.

\bibitem {RY}
R. Ye. \textsl{Foliation by constant mean curvature spheres on
asymptotically flat manifolds.} in Geometric analysis and the
calculus of variations 369-383, Internat. Press, Cambridge, MA
(1996).

\end {thebibliography}

\end{document}